\newtheorem{thmx}{Theorem}
\numberwithin{equation}{section}
\newtheorem{theorem}{Theorem}[section]
\newtheorem{lemma}[theorem]{Lemma}
\theoremstyle{definition}
\newtheorem{definition}[theorem]{Definition}
\newtheorem{question}[theorem]{Question}
\newtheorem{remark}[theorem]{Remark}
\newtheorem{Qx}[thmx]{Question}
\begin{document}
\title[$L^2$ extension and removable singularities of psh functions]{On the Ohsawa--Takegoshi $L^2$ extension theorem and removable singularities of plurisubharmonic functions}
\date{\today}

\author{Xieping Wang}
\address{CAS Wu Wen-Tsun Key Laboratory of Mathematics and School of Mathematical Sciences, University of Science and Technology of China, Hefei 230026, Anhui, People's Republic of China}
\email{xpwang008@ustc.edu.cn}

\thanks{The author was partially supported by NSFC grants 12001513 and 12371083, and the Fundamental Research Funds for the Central Universities.}

\subjclass[2020]{Primary 32D15, 32D20, 32U30, 32U05; Secondary 32W05, 32Q15}
\keywords{$L^2$ extension, $\bar{\partial}$-equation, complete K\"ahler manifolds, removable singularities, plurisubharmonic functions, complete pluripolar sets}

\dedicatory{To Li and Rui'an}

\begin{abstract}
The celebrated Ohsawa--Takegoshi extension theorem for $L^2$ holomorphic functions on bounded pseudoconvex domains in $\mathbb C^n$ is a fundamental result in several complex variables and complex geometry. Ohsawa conjectured in 1995 that the same theorem still holds for more general bounded complete K\"ahler domains in $\mathbb C^n$. Recently, Chen--Wu--Wang confirmed this conjecture in a special case. In this paper we extend their result to the case of holomorphic sections of twisted canonical bundles over relatively compact complete K\"{a}hler domains in Stein manifolds. As an application we prove a Hartogs type extension theorem for plurisubharmonic functions across a compact complete pluripolar set, which is complementary to  a classical result of Shiffman and can be seen as an analogue of the Skoda--El Mir extension theorem for plurisubharmonic functions---a result that has been vacant since at least 1985.
\end{abstract}
\maketitle

\section{Introduction}
One of the most fundamental results in several complex variables is the so-called Ohsawa--Takegoshi $L^2$ extension theorem, which, in a less general form, can be stated as follows:

\begin{thmx}[see \cite{OhsawaTakegoshi87}]\label{thm:original OT-extension}
Let $\Omega$ be a bounded pseudoconvex domain in $\mathbb C^n$ and $H$ be a complex linear subspace of $\mathbb C^n$ intersecting $\Omega$. Then for every plurisubharmonic function $\varphi$ on $\Omega$ and every holomorphic function $f$ on $\Omega\cap H$ such that
  $$
  \int_{\Omega\cap H}|f|^2e^{-\varphi}\, dV_H<\infty,
  $$
there exists a holomorphic function $F$ on $\Omega$ such that $\left.F\right|_{\Omega\cap H}=f$ and
  $$
  \int_{\Omega}|F|^2e^{-\varphi}\, dV\leq C\int_{\Omega\cap H}|f|^2e^{-\varphi}\, dV_H,
  $$
where $C>0$ is a constant depending only on the diameter of $\Omega$ and the codimension of $H$ in $\mathbb C^n$, and $dV$ {\rm(}resp. $dV_H${\rm)} denotes the Lebesgue measure on $\mathbb C^n$ {\rm(}resp. $H${\rm)}.
\end{thmx}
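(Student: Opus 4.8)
The plan is to derive Theorem~\ref{thm:original OT-extension} from H\"ormander's $L^2$ method for the $\bar\partial$-equation, in the sharpened \emph{twisted} form that underlies essentially all modern proofs of Ohsawa--Takegoshi. First I would reduce to codimension one by induction: extending $f$ across a subspace $H$ of codimension $k$ can be achieved by extending across $k$ successive hyperplanes, with the constants multiplying at each stage, so it suffices to treat the case $H=\{z_n=0\}$. Throughout I would work with a smooth, strictly plurisubharmonic exhaustion and replace $\varphi$ by a decreasing sequence of smooth strictly plurisubharmonic approximants, recovering the general $\varphi$ at the very end by a monotone-limit argument.

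Assume then $H=\{z_n=0\}$. Choose a smooth cutoff $\chi$ of $|z_n|$ supported in a thin tube $\{|z_n|<\delta\}$ and equal to $1$ near $H$, and form the smooth approximate extension $F_0:=\chi\,\tilde f$, where $\tilde f(z',z_n):=f(z')$ is the trivial extension. Then $F_0|_H=f$, but $F_0$ fails to be holomorphic. I would seek the genuine extension in the form $F=F_0-u$, where $u$ solves
$$
\bar\partial u=\bar\partial F_0=:v,
$$
and the source $v$ is supported in the annular region where $\chi$ transitions. The decisive device is to solve this equation against the singular weight $e^{-\varphi}|z_n|^{-2}$: finiteness of $\int_\Omega|u|^2 e^{-\varphi}|z_n|^{-2}\,dV$ forces the smooth solution $u$ to vanish along $H$, so that $F|_H=F_0|_H=f$, as required.

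The main obstacle is that the naive H\"ormander estimate with the plurisubharmonic weight $\varphi+\log|z_n|^2$ is useless: away from $H$ its curvature is merely $i\partial\bar\partial\varphi\ge0$, which may degenerate and so yields neither a lower bound nor any control of the constant. The heart of the matter is therefore the twisted Bochner--Kodaira--Nakano inequality: for a positive twist $\eta$ and an auxiliary positive function $\lambda$, one has on $(n,1)$-forms $\alpha$
$$
\int_\Omega(\eta+\lambda)\,|\bar\partial^{*}\alpha|^2\,e^{-\phi}\,dV
\;\ge\;
\int_\Omega\big\langle\big(\eta\, i\partial\bar\partial\phi-i\partial\bar\partial\eta-\lambda^{-1}\,i\partial\eta\wedge\bar\partial\eta\big)\alpha,\alpha\big\rangle\,e^{-\phi}\,dV.
$$
I would take $\phi=\varphi+\log|z_n|^2$ and choose $\eta$ as a function of $\log(|z_n|^2+\delta^2)$, tuning $\lambda$ so that the negative term $-\lambda^{-1}\,i\partial\eta\wedge\bar\partial\eta$ is exactly absorbed by $-i\partial\bar\partial\eta$; what survives is a strictly positive multiple of $i\partial\bar\partial\log|z_n|^2$, i.e. the current of integration along $H$, which is precisely where the source $v$ concentrates. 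Getting this twist right is the delicate point, and it is what produces a constant depending only on $\delta$ (hence, after normalizing so that $|z_n|\le\operatorname{diam}\Omega$, on the diameter) and not on $\varphi$.

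Feeding this into the standard duality/minimal-solution argument yields a solution $u$ of $\bar\partial u=v$ with
$$
\int_\Omega|u|^2\,e^{-\varphi}|z_n|^{-2}\,dV\;\le\;C\int_{\Omega\cap H}|f|^2\,e^{-\varphi}\,dV_H,
$$
the weighted norm of $v$ on the right collapsing, as $\delta\to0$, to the trace integral of $f$ over $H$ (this is where the residue/mean-value computation along $H$ enters). Since $u$ vanishes on $H$, the function $F:=F_0-u$ is holomorphic on $\Omega$, restricts to $f$, and satisfies $\int_\Omega|F|^2 e^{-\varphi}\,dV\le C'\int_{\Omega\cap H}|f|^2 e^{-\varphi}\,dV_H$ once the pole is discarded. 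Tracking the constant through the codimension induction gives the asserted dependence only on $\operatorname{diam}\Omega$ and $\operatorname{codim}H$, and a routine normal-families/weak-limit argument passes from the a priori estimate on the smooth approximants to the final $F$.
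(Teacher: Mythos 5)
First, a point of comparison: the paper itself contains no proof of Theorem~\ref{thm:original OT-extension} --- it is quoted as the classical Ohsawa--Takegoshi theorem with the citation \cite{OhsawaTakegoshi87}, and all of the paper's own arguments (Theorems \ref{thm:OT-extension-twisted} and \ref{thm:Hartogs_PSH-manifold}) are built on top of it. So your proposal can only be measured against the classical argument and against the parallel machinery the paper develops for its generalization. Judged that way, your outline is the standard twisted-$\bar\partial$ proof and is sound in strategy: the a priori inequality you invoke is exactly the paper's Lemma~\ref{lem:Twisted BKN ineq}; your choice of twist ($\eta$ a function of $\psi_\delta=\log(|z_n|^2+\delta^2)$, with $\lambda$ tuned so that $-\sqrt{-1}\partial\bar\partial\eta-\lambda^{-1}\sqrt{-1}\partial\eta\wedge\bar\partial\eta\ge \sqrt{-1}\partial\bar\partial\psi_\delta$) is precisely the choice $\eta_\varepsilon=-\psi_\varepsilon+\log(-\psi_\varepsilon)$, $\lambda_\varepsilon=(1-\psi_\varepsilon)^2$ that the paper reuses in Section~\ref{sect:Ohsawa-Takegoshi extension}; and the mechanism of a cutoff error term absorbed by the surviving positivity of $\sqrt{-1}\partial\bar\partial\psi_\delta$, together with non-integrability of $|z_n|^{-2}$ forcing the correction $u$ to vanish on $H$, is the classical one. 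The reduction to codimension one by induction, with constants multiplying, is also the standard device (the paper's footnote to Question~\ref{Q:Hartogs_extension} alludes to the same trick).

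One step fails as literally written and needs repair. For a general bounded pseudoconvex $\Omega$, the ``trivial extension'' $\tilde f(z',z_n)=f(z')$ is \emph{not} defined on the whole tube $\{|z_n|<\delta\}\cap\Omega$: a point of $\Omega$ with small $|z_n|$ may project to a point $(z',0)\notin\Omega$ (Hartogs-type pseudoconvex domains can overhang the hyperplane, e.g.\ when the fiber over $z'=0$ is strictly smaller than nearby fibers), so $F_0=\chi\tilde f$ need not exist globally and the source $v=\bar\partial F_0$ is undefined there. The standard cure is to run the whole argument on a pseudoconvex exhaustion $\Omega_j\subset\subset\Omega$: if $|z_n|<\operatorname{dist}(\Omega_j,\partial\Omega)$ then $(z',0)\in\Omega$ automatically, so the cutoff extension exists on $\Omega_j$, the twisted estimate gives extensions $F_j$ with a constant depending only on $\operatorname{diam}\Omega$ and the codimension (not on $j$), and one concludes by Montel plus Fatou. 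This is the same limiting scheme you already set up for the smooth approximants $\varphi_\delta\searrow\varphi$ --- which you do genuinely need, both because the twisted inequality is an a priori estimate for smooth weights and because the collapse of the right-hand side onto $\int_{\Omega\cap H}|f|^2e^{-\varphi}\,dV_H$ as the tube shrinks uses continuity of the weight --- but it must be invoked for the domain geometry as well, not only for the weight. With that repair, your proposal is a correct sketch of the classical proof.
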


Since the seminal work of Ohsawa--Takegoshi \cite{OhsawaTakegoshi87}, there has been a great deal of work in the literature on the above theorem and its various generalizations and variants in different contexts, as well as applications. The Ohsawa--Takegoshi $L^2$ extension theorem has proved to be an indispensable and very effective tool for solving many important problems in several complex variables, complex analytic and algebraic geometry. We refer the reader to \cite{Berndtsson-note, Blocki_Survey14, Demaillybook2, McN-Var_survey15, Ohsawabook, Ohsawa_L2-survey, Paun10_ICM, Siu_Extension02} for a comprehensive overview of the subject, and to \cite{Blocki13, GZ_optimal, GZ_openness} for significant recent developments. In particular, Guan--Zhou \cite{GZ_optimal} established an optimal constant version of the Ohsawa--Takegoshi $L^2$ extension theorem in a very general setting, with many impressive applications. After this, there is also a considerable amount of work closely related to the Ohsawa--Takegoshi $L^2$ extension theorem; see \cite{Berndtsson-Lempert, Cao-Paun-Bern24, Chan-Choi22, DWWZ_AJM, DNWZ23_Math-Ann, Hosono_jet-ext, Kim_JMPA,LXZ_Math-Z, LX_JFA, Rao-Zhang_jet-ext, ZZ_optimal, ZZ_MathAnn} and others.

Despite the above achievements, a number of questions about the $L^2$ extension of holomorphic functions remain to be answered,  including the following long-standing one posed by Ohsawa himself:

\begin{Qx}[see \cite{Ohsawa95}]\footnote{Although formulated slightly differently, this question is actually equivalent to Ohsawa's original question, as can be easily seen by induction on $n$.}\label{Q:Hartogs_extension}
Does Theorem \ref{thm:original OT-extension} still hold for more general bounded {\it complete K\"ahler} domains in $\mathbb C^n$?
\end{Qx}

For the background and motivation for asking this question, we refer the reader to Ohsawa's original paper \cite{Ohsawa95} (and his recent book \cite{Ohsawabook}). We only remark here that, as has been amply demonstrated by the work of Demailly \cite{Demailly82},  H\"{o}rmander's weighted $L^2$ theory for the $\bar{\partial}$-operator on Stein manifolds works equally well on more general complete K\"{a}hler manifolds, and thus it is reasonable to expect that the $L^2$ extension theory of Ohsawa--Takegoshi, which builds on the former, does as well. At present, it seems difficult to answer Question \ref{Q:Hartogs_extension} in full generality; to the best of our knowledge, the only known result so far is the one recently obtained by Chen--Wu--Wang \cite{CWW}, who gave an affirmative answer in the special case where $H$ is $0$-dimensional, i.e., $H$ is a single point. This result has found a few unexpected applications; see \cite{CWW} for details.

In this paper we first extend the above result of Chen--Wu--Wang to the case of holomorphic sections of twisted canonical bundles over relatively compact complete K\"{a}hler domains in Stein manifolds, and then present some new applications. Our first main result reads as follows.

\begin{theorem}\label{thm:OT-extension-twisted}
Let $X$ be an $n$-dimensional Stein manifold with a continuous volume form $dV_X$ and $\Omega$ be a relatively compact complete K\"ahler domain in $X$. Suppose $L$ is a holomorphic line bundle over $\Omega$ and $h$ is a singular Hermitian metric on $L$ with semi-positive curvature current. Then for every $x_0\in \Omega$ and $s_{x_0}\in \Lambda^n T^{\ast}_{x_0}\Omega \otimes L_{x_0}$, there is an $L$-valued holomorphic $n$-form $s$ on $\Omega$ such that $s(x_0)=s_{x_0}$ and
   $$
   \int_{\Omega} \sqrt{-1}^{\,n^2}\{s, s\}_h \leq C\frac{\sqrt{-1}^{\,n^2}\{s_{x_0}, s_{x_0}\}_h}{dV_X(x_0)}
   $$
provided the right-hand side is finite, where $C>0$ is a constant depending only on $n$, $\Omega$ and $dV_X$.
\end{theorem}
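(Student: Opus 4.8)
The plan is to reduce the extension problem to solving a single $\bar\partial$-equation on $\Omega$ against a weight carrying a critical logarithmic pole at $x_0$, and then to invoke the $L^2$ existence theory for $\bar\partial$ on complete K\"ahler manifolds. The guiding observation is that, for an $L$-valued $(n,0)$-form $s$, the quantity $\int_\Omega \sqrt{-1}^{\,n^2}\{s,s\}_h$ is intrinsic: it is a genuine $(n,n)$-form paired through $h$ and needs no auxiliary metric to be integrated. Consequently I am free to fix any complete K\"ahler metric $\omega$ on $\Omega$ (which exists by hypothesis) and run all of the $\bar\partial$-machinery with respect to $\omega$; the completeness of $\omega$ is precisely what will play the role of pseudoconvexity, guaranteeing density of compactly supported smooth forms in the graph norm and hence the passage from an a priori inequality to an actual solution.

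Concretely, I would first trivialize: choose holomorphic coordinates $z=(z_1,\dots,z_n)$ on a coordinate ball $B\Subset\Omega$ centered at $x_0$ together with a local holomorphic frame of $L$, so that on $B$ the metric reads $h=e^{-\varphi}$ with $\varphi$ plurisubharmonic (the curvature current being semi-positive). Let $\hat s$ be the local holomorphic $L$-valued $n$-form that is constant in this frame with $\hat s(x_0)=s_{x_0}$, and let $\chi\in C_c^\infty(B)$ equal $1$ near $x_0$. The smooth form $\tilde s=\chi\,\hat s$ is then a global $L$-valued $n$-form agreeing with $s_{x_0}$ at $x_0$, and $v:=\bar\partial\tilde s=(\bar\partial\chi)\wedge\hat s$ is a smooth $(n,1)$-form supported in the compact annulus $\{\bar\partial\chi\ne0\}$, in particular away both from $x_0$ and from $\partial\Omega$. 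I would then solve $\bar\partial u=v$ with the estimate taken against the weight $e^{-\varphi}|z|^{-2n}$ near $x_0$ (globally $e^{-\varphi}e^{-2n\psi}$ with $\psi=\chi\log|z|$). Since $|z|^{-2n}$ is not locally integrable at $x_0$, the finiteness of $\int_{\Omega}|u|^2 e^{-\varphi}|z|^{-2n}$ forces $u(x_0)=0$; hence $s:=\tilde s-u$ is the desired holomorphic section, with $\bar\partial s=0$ and $s(x_0)=\tilde s(x_0)=s_{x_0}$. Because $v$ lives where $|z|\ge r_0>0$, the right-hand side of the $\bar\partial$-estimate is comparable to $\{s_{x_0},s_{x_0}\}_h/dV_X(x_0)$, the continuity of $dV_X$ and upper semicontinuity of $\varphi$ near $x_0$ being used to absorb the local constants into $C$.

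The technical heart is the $\bar\partial$-estimate itself, and this is where I expect the main obstacle to lie. The weight $\varphi+2n\log|z|$ is plurisubharmonic, but its complex Hessian degenerates (the Hessian of $\log|z|^2$ has a radial kernel), while $h$ contributes only semi-positive curvature and $\Omega$ need not be pseudoconvex; thus the naive H\"ormander constant is not controlled. I would overcome this with the twisted Bochner--Kodaira--Nakano / Ohsawa--Takegoshi--Manivel technique: introducing auxiliary positive functions built from $\log|z|$, the gradient terms produced by the pole are made to generate exactly the positivity absent from the Hessian, yielding a uniform constant. To keep the functional analysis legitimate I would regularize in two layers -- approximate the singular metric $h$ by smooth metrics $h_\nu$ with curvature bounded below (available since $\overline\Omega$ is compact in the Stein manifold $X$, where Demailly-type regularization and strictly plurisubharmonic exhaustions exist), and, if needed, exhaust $\Omega$ by relatively compact complete K\"ahler subdomains -- solve the regularized equations with the uniform bound, and extract a weak limit.

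Finally I would pass to the limit. The uniform estimate $\int|u_\nu|^2 e^{-\varphi}|z|^{-2n}\le C\{s_{x_0},s_{x_0}\}_h/dV_X(x_0)$ yields, through weak compactness together with Montel on interior compacta, a holomorphic limit $u$ retaining both the $L^2$ bound and the pole-forced vanishing $u(x_0)=0$; the intrinsic nature of the top-form norm ensures the limiting inequality is precisely the asserted one, with $C$ depending only on $n$, $\Omega$ and $dV_X$. The delicate points to watch are that the twisting constant stays independent of $\nu$ and of the blow-up of $\omega$ near $\partial\Omega$, and that the weak limit does not shed mass at $x_0$ -- that is, that the vanishing condition genuinely survives -- which is guaranteed precisely because the forcing weight $|z|^{-2n}$ is non-integrable there.
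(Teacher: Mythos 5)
Your reduction scheme (cut off a local extension, solve $\bar\partial u=v$ against a weight with the non-integrable pole $|z|^{-2n}$ to force $u(x_0)=0$, set $s=\tilde s-u$) is indeed the paper's overall skeleton, but your proposal breaks down at the one step where the Ohsawa--Takegoshi difficulty actually lives. You take $\hat s$ to be the \emph{constant} extension in a local frame, cut off at a \emph{fixed} scale $r_0$, and then claim that the resulting error term is ``comparable to $\{s_{x_0},s_{x_0}\}_h/dV_X(x_0)$,'' invoking upper semicontinuity of $\varphi$. This is false: upper semicontinuity gives a \emph{lower} bound for $e^{-\varphi}$ near $x_0$, never an upper bound for $e^{-\varphi}$ on the annulus $\{|z|\approx r_0\}$. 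The error term contains $\int_{\{|z|\approx r_0\}}e^{-\varphi}$, and there is no estimate of this quantity by $C\,e^{-\varphi(x_0)}$ with $C$ independent of $\varphi$: take $\varphi(z)=2nN\log|z-a|$ with $a$ in the annulus and $N\geq 1$; then $e^{-\varphi(x_0)}<\infty$ while $\int_{\{|z|\approx r_0\}}e^{-\varphi}=\infty$, yet the conclusion of the theorem still holds (e.g. $f=c\,((z-a)/(x_0-a))^{nN}dz\otimes\zeta$ works). If your step were correct, the original Ohsawa--Takegoshi theorem would be an immediate corollary of H\"ormander's theorem, which it is not.

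The paper avoids this exactly where you cannot: instead of the constant extension at fixed scale, it first applies the \emph{classical} Ohsawa--Takegoshi theorem on a small coordinate ball $\varepsilon B_{x_0}$ (available there since balls are pseudoconvex) to produce a holomorphic $n$-form $f_\varepsilon$ with $f_\varepsilon(x_0)=f_{x_0}$ and the scaled bound $\varepsilon^{-2n}\int_{\varepsilon B_{x_0}}\sqrt{-1}^{\,n^2}f_\varepsilon\wedge\bar f_\varepsilon\,e^{-\varphi}\leq C\,|c|^2e^{-\varphi(x_0)}$ (Lemma \ref{lem:scaling OT}); it then cuts off at scale $\varepsilon$ and twists with $\psi_\varepsilon=\log(\rho+\varepsilon^2)$, whose curvature lower bound $\frac{\varepsilon^2}{(|z|^2+\varepsilon^2)^2}\sqrt{-1}\partial\bar\partial|z|^2$ makes $[\Theta_\varepsilon,\Lambda_\omega]^{-1}$ contribute a factor $\sim\varepsilon^2$ on the support of $\bar\partial\chi(|z|^2/\varepsilon^2)$, exactly compensating the pole $|z|^{-2n}\sim\varepsilon^{-2n}$ and $|\bar\partial\chi|^2\sim\varepsilon^{-2}$, so the error is bounded by the \emph{local OT bound}, not by $\sup e^{-\varphi}$ on an annulus. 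A second, lesser issue: your treatment of the $\bar\partial$-existence theorem on complete K\"ahler manifolds with singular $h$ (``regularize by $h_\nu$ with curvature bounded below and take weak limits'') glosses over the fact that any global smooth regularization loses positivity by an error $\delta_j\omega$ that only tends to $0$ almost everywhere; controlling this is precisely the content of the paper's Theorem \ref{thm:L^2 estimate of OT-type}, proved by an induction on $n-q$ with two correcting terms, and Remark \ref{rmk:supplemental remark} (iv) explains why the naive preservation of strict positivity you assume is not available.
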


Here $\{\,\, ,\,\}_h$ denotes the sesquilinear product combining the wedge product of scalar valued forms with the Hermitian metric $h$ on $L$ (see
Section \ref{sect:Ohsawa--Takegoshi extension} below for a precise definition).

We now sketch the idea of the proof of Theorem \ref{thm:OT-extension-twisted}. Following the strategy of Ohsawa \cite{Ohsawa95} and Chen--Wu--Wang \cite{CWW}, we reduce Theorem \ref{thm:OT-extension-twisted} to an Ohsawa--Takegoshi type $L^2$ existence theorem for the $\bar{\partial}$-operator on Hermitian holomorphic line bundles, subject to certain curvature constraints, over \textit{complete K\"ahler} manifolds (see Theorem \ref{thm:L^2 estimate of OT-type} below). Since the Hermitian metric on the line bundle under consideration is allowed to be \textit{singular}, the proof of Theorem \ref{thm:L^2 estimate of OT-type} relies heavily on Demailly's approximation theory \cite{Demailly82, Demailly92,Demailly94}, in which there are mainly two approximation theorems for singular Hermitian metrics:
\begin{itemize}[leftmargin=2.0pc, parsep=2pt]
\item  one is a local approximation by Hermitian metrics that are smooth on the complement of suitably chosen complex subvarieties in the manifold, with arbitrarily small loss of positivity in curvature;

\item  the other is a global approximation by Hermitian metrics that are smooth on the entire manifold, but with less controllable loss of positivity in curvature.
\end{itemize}
The first approximation theorem has been used by Zhou--Zhu \cite{ZZ_optimal} (see also \cite{ZZ_MathAnn}) to establish a singular metric (and optimal constant) version of the Ohsawa--Takegoshi--Manivel $L^2$ extension theorem on weakly pseudoconvex K\"{a}hler manifolds \cite{Demailly-Manivel_extension}. In the more general case of complete K\"{a}hler manifolds, the lack of pseudoconvexity however makes it hard to apply the first approximation theorem. Instead we make use of the second one, which combined with the a priori inequality for the twisted $\bar{\partial}$-operator due to Ohsawa--Takegoshi \cite{OhsawaTakegoshi87} allows us to solve the $\bar{\partial}$-equation with two error terms by an induction argument, thus proving Theorem \ref{thm:L^2 estimate of OT-type}. This idea of arguing by induction is inspired by the work of 
Demailly \cite{Demailly82}.

Next, we present an interesting application of Theorem \ref{thm:OT-extension-twisted} to the removable singularities of plurisubharmonic (psh for short) functions. Specifically, we prove the following

\begin{theorem}\label{thm:Hartogs_PSH-manifold}
Let $\Omega$ be a domain in a Stein manifold of dimension $\geq 2$ and $K$ be a compact complete pluripolar subset of $\Omega$. Then every psh function on $\Omega\!\setminus\!K$ admits a unique psh extension to $\Omega$.
\end{theorem}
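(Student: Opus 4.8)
The plan is to derive Theorem \ref{thm:Hartogs_PSH-manifold} from the $L^2$ extension result Theorem \ref{thm:OT-extension-twisted} by a weighted Bergman kernel argument, after a few standard reductions. First I would localize: since the assertion concerns extension across the compact set $K$ and two extensions are forced to agree off the pluripolar set $K$, it suffices to produce a plurisubharmonic extension on a fixed relatively compact neighbourhood of $K$ and then glue. Concretely I would fix a smooth strictly plurisubharmonic exhaustion $\rho$ of the ambient Stein manifold and take $\Omega_1=\{\rho<c\}$ with $K\Subset\Omega_1\Subset\Omega$ and $c$ a regular value, so that $\Omega_1$ is a relatively compact, smoothly bounded, strictly pseudoconvex (hence complete Kähler) domain. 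Because $K$ is complete pluripolar, say $K=\{\psi=-\infty\}$ with $\psi$ plurisubharmonic near $\overline{\Omega_1}$, the complement $\Omega_1\setminus K$ is again complete Kähler (Demailly \cite{Demailly82}), so it is exactly a domain to which Theorem \ref{thm:OT-extension-twisted} applies. Uniqueness is immediate, since any plurisubharmonic function equals the upper regularization of its restriction to the complement of a pluripolar set, so two extensions agreeing on $\Omega\setminus K$ agree everywhere.

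The existence statement reduces, by the classical removable singularity theorem for plurisubharmonic functions across closed pluripolar sets, to a single a priori bound: that $u$ is \emph{locally bounded above} near $K$. To obtain this I would set up a weighted Bergman kernel comparison. For each integer $m\ge1$ consider the Hilbert space $\mathcal H_m$ of holomorphic functions $f$ on $\Omega_1\setminus K$ with $\int_{\Omega_1\setminus K}|f|^2e^{-2mu}\,dV_X<\infty$, and its diagonal kernel $B_m(z)=\sup\{|f(z)|^2:\ \|f\|_{\mathcal H_m}\le1\}$, so that $u_m:=\tfrac{1}{2m}\log B_m$ is plurisubharmonic on $\Omega_1\setminus K$ (and finite there, point evaluations on $\mathcal H_m$ being bounded away from $K$). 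Applying Theorem \ref{thm:OT-extension-twisted} to the trivial line bundle equipped with the singular metric $e^{-2mu}$, whose curvature current $2m\,dd^c u\ge0$ since $u$ is plurisubharmonic, yields for every $x_0\in\Omega_1\setminus K$ a holomorphic $f$ with $f(x_0)=1$ and $\int|f|^2e^{-2mu}\,dV_X\le C e^{-2mu(x_0)}/dV_X(x_0)$; as $dV_X$ is bounded below on the compact $\overline{\Omega_1}$, this gives the lower bound $u_m\ge u-C_1/m$ on $\Omega_1\setminus K$ with $C_1$ independent of $m$ and of $x_0$.

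For the matching upper bound I would exploit that $K$ is compact and $\dim\ge2$: by the classical Hartogs extension theorem every $f\in\mathcal H_m$ extends to a holomorphic function $\tilde f$ on the pseudoconvex $\Omega_1$, and $K$ being Lebesgue-null the $L^2$ norms are unchanged. The maximum principle applied to $\tilde f$ on any $\Omega_1'$ with $K\Subset\Omega_1'\Subset\Omega_1$ gives $|f(z)|\le\max_{\partial\Omega_1'}|f|$ for $z\in\Omega_1'\setminus K$, whence, dividing by $\|f\|_{\mathcal H_m}$ and taking the supremum over $f$, the kernel itself obeys $B_m(z)\le\max_{\partial\Omega_1'}B_m=:M_m<\infty$ on $\Omega_1'\setminus K$ (the maximum being finite since $\partial\Omega_1'$ is a compact subset of $\Omega_1\setminus K$). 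Thus $u_m\le\tfrac{1}{2m}\log M_m$ on $\Omega_1'\setminus K$, and combining with the lower bound of the previous step,
$$
u\le u_m+\frac{C_1}{m}\le\frac{1}{2m}\log M_m+\frac{C_1}{m}<\infty\qquad\text{on }\Omega_1'\setminus K,
$$
which is precisely the desired local boundedness of $u$ above near $K$. Feeding this into the removability theorem produces the plurisubharmonic extension, and uniqueness has already been noted.

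The step I expect to be the main obstacle is this boundedness above, and the delicate point is an apparent circularity: the weight $e^{-2mu}$ degenerates exactly where $u$ is large, so a priori nothing prevents $B_m$, hence $u_m$, from blowing up as $z\to K$, in which case the lower bound $u\le u_m+C_1/m$ is vacuous. The mechanism that breaks the circle is the Hartogs-driven maximum principle for $B_m$: because $K$ is a \emph{compact} singularity in dimension at least two, the competing holomorphic functions extend across $K$ and are therefore controlled by their boundary values on $\partial\Omega_1'$, a compact set away from $K$ on which $B_m$ is automatically finite. This a priori cap, completely insensitive to the size of $u$ near $K$, is what the $L^2$ lower bound from Theorem \ref{thm:OT-extension-twisted} is then played against. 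I would need to check carefully that the normalization in Theorem \ref{thm:OT-extension-twisted} (sections of the twisted canonical bundle versus scalar $L^2$ functions weighted against $dV_X$) produces the stated pointwise lower bound with a constant uniform in $x_0\in\Omega_1'$, and that $\Omega_1\setminus K$ genuinely carries a complete Kähler metric; these are the two places where the completeness of the pluripolar set $K$ and the Stein hypothesis are essential.
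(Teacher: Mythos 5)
Your skeleton matches the paper's up to the decisive step: localize to a relatively compact Stein neighbourhood $\Omega_1$ of $K$, note that $\Omega_1\!\setminus\!K$ is a relatively compact complete K\"ahler domain, use the $L^2$ extension theorem to attach to each point a holomorphic object with prescribed value and uniformly bounded weighted $L^2$ norm, extend that object across $K$ by the Hartogs phenomenon, and reduce everything to local boundedness of $u$ above near $K$ plus the classical removability theorem. Where you genuinely diverge is in how the uniform $L^2$ bound is converted into a pointwise bound: the paper argues \emph{locally}, using Shiffman's lemma (pluripolar sets have Hausdorff dimension $\leq 2n-2$, so one can choose coordinates with $(\Delta'\times\partial\Delta'')\cap K=\emptyset$) followed by Cauchy estimates over an annular region avoiding $K$; you argue \emph{globally}, capping the Bergman kernel $B_m$ on $\Omega_1'$ by its supremum on $\partial\Omega_1'$ via the maximum principle. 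Your mechanism is attractive because it avoids geometric measure theory altogether (and note that $m=1$ already suffices; the family in $m$ plays no role for mere boundedness).

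However, your mechanism has a cost that the paper's does not, and it is exactly the point you flagged but did not resolve: the maximum principle is a statement about \emph{scalar} holomorphic functions, whereas Theorem \ref{thm:OT-extension-twisted} with the trivial twist produces holomorphic $n$-\emph{forms}. These cannot be identified in general: the canonical bundle of a Stein manifold need not be trivial, so there may be no nowhere-vanishing holomorphic $n$-form near $\overline{\Omega_1}$ with which to scalarize; and the density $\sqrt{-1}^{\,n^2}f\wedge\bar f/dV_X$ of an extended form is not log-plurisubharmonic (the volume form contributes a Hessian term of uncontrolled sign), so no maximum principle applies to it directly. As written, the sentence ``Applying Theorem \ref{thm:OT-extension-twisted} to the trivial line bundle $\ldots$ yields a holomorphic $f$ with $f(x_0)=1$'' is therefore incorrect, and the gap sits at the heart of your upper-bound step. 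It is fixable inside the paper's framework, precisely because Theorem \ref{thm:OT-extension-twisted} allows an arbitrary twist: take $dV_X$ smooth, let $h_K$ be the metric it induces on the canonical bundle $K_X$, fix a smooth strictly plurisubharmonic exhaustion $\rho$ of $X$, and apply Theorem \ref{thm:OT-extension-twisted} with $L=K_X^{-1}|_{\Omega_1\setminus K}$ endowed with $h=h_K^{-1}e^{-A\rho-2mu}$, where $A\gg1$ is chosen so that $-\sqrt{-1}\Theta_{h_K}(K_X)+A\sqrt{-1}\partial\bar\partial\rho\geq 0$ on $\overline{\Omega_1}$. Then $\Lambda^nT^{\ast}\Omega\otimes L$ is canonically trivial, its sections are functions, $\sqrt{-1}^{\,n^2}\{s,s\}_h=|f|^2e^{-A\rho-2mu}\,dV_X$, and $e^{-A\rho}$ is bounded above and below on $\Omega_1$, so you recover exactly the scalar statement your Bergman-kernel argument needs. (The paper's Cauchy-estimate route never faces this issue, since it only manipulates the coefficient of the extended form in a single coordinate chart.)

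Two further reductions are under-justified, and both are where complete pluripolarity really enters, via Col\c{t}oiu's theorem (Theorem \ref{thm:def-complete-polar}). First, your $\Omega_1=\{\rho<c\}$ with $K\Subset\Omega_1\Subset\Omega$ need not exist: sublevel sets of a fixed exhaustion of $X$ cannot in general be squeezed between $K$ and $\Omega$, and a compact subset of a Stein manifold need not admit small Stein neighbourhoods at all (e.g. a sphere $S^3\subset\mathbb C^2$). The paper instead builds a Stein neighbourhood from a global plurisubharmonic defining function $\widetilde{\rho}$ of $K$, truncated by a max construction. Second, a merely plurisubharmonic $\psi$ with $K=\psi^{-1}(-\infty)$ does not produce a complete K\"ahler metric on $\Omega_1\!\setminus\!K$, since $\sqrt{-1}\partial\bar\partial\bigl(-\log(-\psi)\bigr)$ is only a current; one needs a defining function that is smooth and strictly plurisubharmonic off $K$, which is exactly what Col\c{t}oiu provides and what the paper's Theorem \ref{thm:complete-Kahler} is for. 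Citing \cite{Demailly82} here is not enough, as that argument concerns complements of \emph{analytic} sets.
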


Recently, Chen \cite{Chen_Har-Sob} showed that the well-known Hartogs extension theorem for holomorphic functions is also valid for pluriharmonic functions (see also \cite{Wang_Pisa}). However, it has long been known that this is not the case for psh functions (see, e.g., \cite{Bed-Tay_subext}). Given this situation,  it is worth proving a Hartogs extension theorem of a slightly different type for psh functions, such as Theorem \ref{thm:Hartogs_PSH-manifold}. Our motivation for such a result also stems from a renewed interest in the original Hartogs extension theorem in recent years; for details see the most recent work \cite{BDP22_Hartogs, Vijiitu} and the references therein.

To put Theorem \ref{thm:Hartogs_PSH-manifold} in historical context, we recall that there are two landmark results in the extension theory of closed positive currents, namely the Harvey extension theorem \cite{Harvey_Extension} and the Skoda--El Mir extension theorem \cite{Skoda_Extension82, El-Mir_cmp-plolar}, which are complementary to each other and either of which unifies and generalizes many important results previously proved by Bishop, Remmert--Stein, Shiffman, Siu and others; for more information see the original papers just mentioned and also \cite{Harvey_CPAM, Harvey_Survey, Sibony85}. Corresponding to Harvey's extension theorem there is an extension theorem for psh functions due to Shiffman \cite{Shiffman72}, which states that every psh function on an $n$-dimensional complex manifold extends plurisubharmonically across a closed set of Hausdorff $(2n-2)$-measure zero. Theorem \ref{thm:Hartogs_PSH-manifold} can be seen more or less as an analogue of the Skoda-El Mir extension theorem for psh functions, which seems to have been missing until this paper, at least as far as we have found.

To say more about Theorem \ref{thm:Hartogs_PSH-manifold}, we also recall a standard fact in pluripotential theory that pluripolar sets in an $n$-dimensional complex manifold have Hausdorff dimension at most $2n-2$, and mention the existence of compact complete pluripolar sets with positive (finite or infinite) Hausdorff $(2n-2)$-measure in $\mathbb C^n$; see Subsection \ref{sect:Complete pluripolar sets} for precise references. These two facts show that Theorem \ref{thm:Hartogs_PSH-manifold} is complementary to Shiffman's theorem as described above, meaning that either of the two theorems can be applied to certain cases where the other cannot. Furthermore, we point out that Theorem \ref{thm:Hartogs_PSH-manifold} is  global in nature, whereas  Shiffman's theorem (as well as many other related results in the literature; see \cite{Siu74_Analyticity, Harvey_CPAM, CWW}, etc.) is local in nature.

Interestingly, the proof of Theorem \ref{thm:Hartogs_PSH-manifold} involves a {\it combination} of the $L^2$ theory for the  $\bar{\partial}$-operator (more specifically, Theorem \ref{thm:OT-extension-twisted}), pluripotential theory, and some basic geometric measure theory; see Sect. \ref{sect: psh extension} for details. In contrast, in most of the previous work on the extension of closed positive currents (particularly, complex varieties) and psh functions, the $L^2$ method and the geometric measure-theoretic method have often been used {\it separately}, notably by Siu \cite{Siu74_Analyticity} and Shiffman \cite{Shiffman68}, Harvey \cite{Harvey_Extension}, respectively. The use of Theorem \ref{thm:OT-extension-twisted} in the proof of Theorem \ref{thm:Hartogs_PSH-manifold} is inspired by the work of Chen--Wu--Wang \cite{CWW} and Demailly \cite{Demailly92}. It should also be mentioned that a weaker version of Theorem \ref{thm:Hartogs_PSH-manifold}, with some additional assumptions, was already proved by the author in his previous work \cite{Wang_22} by quite a different method.

We now pose the following fairly natural question concerning Theorem \ref{thm:Hartogs_PSH-manifold}:

\begin{question}\label{Conj:psh_Hartogs}
Does Theorem \ref{thm:Hartogs_PSH-manifold} still hold when $K$ is only assumed to be a compact pluripolar subset of $\Omega$?
\end{question}

The answer to this question is likely to be positive, although it remains out of reach at the time of writing. We also refer the interested reader to
\cite[Conjecture 4.1]{CWW} for a  different but closely related problem.

The paper is organized as follows. In Section \ref{sect:preliminaries} we review some necessary notions and results that will be used in this paper. We then prove in Section \ref{sect:Ohsawa--Takegoshi existence}  an Ohsawa--Takegoshi type $L^2$ existence theorem for the $\bar{\partial}$-operator on complete K\"ahler manifolds, which is in turn used in Section \ref{sect:Ohsawa--Takegoshi extension} to prove Theorem \ref{thm:OT-extension-twisted}. Finally, the proof of Theorem $\ref{thm:Hartogs_PSH-manifold}$ is carried out in Section \ref{sect: psh extension}.

\medskip
\noindent {\bf Acknowledgements.}
The author would like to thank Professor Bo-Yong Chen for drawing his attention to the question of Ohsawa as described in the Introduction to this paper. The author would also like to thank Professor Xiangyu Zhou, his mentor when he was a postdoctoral fellow at the Institute of Mathematics, AMSS, Chinese Academy of Sciences from 2017 to 2019, for his interest in this work, and for his constant support and encouragement over the years.

\section{Preliminaries}\label{sect:preliminaries}
In this section we fix some necessary notation and collect some fundamental results from pluripotential theory and the $L^2$ theory of the $\bar\partial$-operator, which will be used in subsequent sections.

\subsection{Complete pluripolar sets and their defining functions}\label{sect:Complete pluripolar sets}
We begin by recalling the definition of complete pluripolar sets.  Let $X$ be a complex manifold and ${\rm Psh}(X)$ denote the set of all psh functions on $X$.

\begin{definition}
A set $E\subset X$ is called {\it complete pluripolar} if for every point $x\in E$ there exists a neighborhood $U$ of $x$ and a function $\varphi\in {\rm Psh}(U)$ such that
  $$
  E\cap U=\varphi^{-1}(-\infty).
  $$
\end{definition}

The set of all complex subvarieties of $X$ forms a particularly important class of (closed) complete pluripolar sets, but complete pluripolar sets are much more general: for instance, the Cartesian product of finitely many (possibly distinct) Cantor type sets of logarithmic capacity zero in the complex plane is a {\it compact} complete pluripolar set in the corresponding complex Euclidean space (see, e.g., \cite{Ransfordbook}), but obviously not complex-analytic. One may also consult \cite{El-Mir_cmp-plolar, LMP_IUMJ, Edlund_cmp-curv, DM_cmp-plolar} and the references therein for many other nontrivial, constructive examples of {\it compact} complete pluripolar sets in $\mathbb C^n$, especially those of minimal Hausdorff codimension (i.e., of Hausdorff codimension two). Using these examples and the Bishop--Narasimhan--Remmert embedding theorem, one can construct a large number of compact complete pluripolar sets in general Stein manifolds.

On Stein manifolds, Col\c{t}oiu \cite{Coltoiu} proved the following important result concerning the existence of a global defining function for a closed complete pluripolar set.

\begin{theorem}[see \cite{Coltoiu}]\label{thm:def-complete-polar}
Let $X$ be a Stein manifold and $E\subset X$ be a closed complete pluripolar set. Then there is a function $\rho\in {\rm Psh}(X)\cap C^{\infty}(X\!\setminus\!E)$ such that $\rho^{-1}(-\infty)=E$ and $\sqrt{-1}\partial\bar{\partial}\rho>0$ on $X\!\setminus\!E$.
\end{theorem}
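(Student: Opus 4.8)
The plan is to first produce a single global psh function $\rho_0$ on $X$ with $\rho_0^{-1}(-\infty)=E$, and then to upgrade it to a function $\rho$ that is in addition smooth and strictly psh on $X\!\setminus\!E$. Since $X$ is second countable and $E$ is locally complete pluripolar, I would begin by choosing a locally finite open cover $\{U_j\}_{j\geq1}$ of $X$ together with functions $\varphi_j\in{\rm Psh}(U_j)$, normalized so that $\varphi_j<0$, with $E\cap U_j=\varphi_j^{-1}(-\infty)$ whenever $U_j\cap E\neq\varnothing$, and with $\varphi_j$ a finite, smooth, strictly psh function on those $U_j$ disjoint from $E$; the latter exist because $X$ is Stein and hence locally carries smooth strictly psh functions. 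The essential point that drives the whole construction is that every local model shares the \emph{same} pole set $E$, so that wherever two of them are simultaneously $-\infty$ they are automatically consistent.

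Next comes the globalization step, which I expect to be the main obstacle. Because psh functions carry no linear cocycle structure and the finite values of the $\varphi_j$ off $E$ are completely uncontrolled, the $\varphi_j$ cannot be patched by a partition of unity or by a $\bar{\partial}$/Cousin argument; one must instead glue by order-theoretic means. Fix a smooth strictly psh exhaustion $\phi$ of the Stein manifold $X$ and let $\Omega_k=\{\phi<c_k\}\Subset\Omega_{k+1}$ exhaust $X$ by relatively compact Stein domains. Over each compact $\overline{\Omega_k}$ only finitely many $U_j$ are needed, and after subtracting suitable constants and shrinking the $U_j$ slightly I would glue these finitely many $\varphi_j$ by Demailly's regularized maximum, arranging that near each boundary piece $\partial U_j$ the corresponding model is dominated by its neighbours so that the regularized maximum reduces there to a single $\varphi_i$; this yields $\psi_k\in{\rm Psh}(\Omega_k)$ with $\psi_k^{-1}(-\infty)=E\cap\Omega_k$. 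A diagonal/series argument over $k$ — adding large multiples of $\phi$ on successive collars to force compatibility of consecutive $\psi_k$, and again using that all poles coincide with $E$ — then assembles a single $\rho_0\in{\rm Psh}(X)$ with $\rho_0^{-1}(-\infty)=E$. The delicate part is checking that the regularized maxima and the shift constants can be chosen coherently, without creating spurious poles off $E$ or destroying poles on $E$.

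Finally I would pass from $\rho_0$ to the desired $\rho$. Arranging at the outset, at the cost of shrinking the $U_j$, that the local models are continuous on $U_j\!\setminus\!E$ — which is standard for a complete pluripolar set — and noting that the regularized maximum preserves continuity, one may take $\rho_0$ continuous on $X\!\setminus\!E$; then $\rho_0+\varepsilon\phi$ is continuous and strictly psh on $X\!\setminus\!E$ for every $\varepsilon>0$. Applying Richberg's regularization theorem on the open complex manifold $X\!\setminus\!E$ produces a smooth function that is strictly psh there and approximates $\rho_0+\varepsilon\phi$ uniformly; the uniform control, together with the strict-positivity margin supplied by $\varepsilon\phi$, lets me glue this smoothing to the unaltered singular behaviour in a shrinking neighbourhood of $E$ by one further regularized maximum, so that the result stays globally psh across $E$, keeps $E$ as its exact $-\infty$ locus, and satisfies $\sqrt{-1}\partial\bar{\partial}\rho>0$ on $X\!\setminus\!E$. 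This furnishes $\rho\in{\rm Psh}(X)\cap C^{\infty}(X\!\setminus\!E)$ with $\rho^{-1}(-\infty)=E$, as required.
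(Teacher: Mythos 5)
First, a point of order: the paper does not prove this statement at all --- it is quoted as a known theorem of Col\c{t}oiu \cite{Coltoiu} and used as a black box --- so there is no proof in the paper to compare yours against; your attempt has to be judged on its own terms. Judged so, it has a genuine gap, and the gap sits exactly where the real content of Col\c{t}oiu's theorem lies: the globalization step. You propose to glue the finitely many local models $\varphi_j$ by a regularized maximum ``after subtracting suitable constants'', arranging that near each boundary piece $\partial U_j$ the model $\varphi_j$ is dominated by its neighbours $\varphi_i$. When $E$ accumulates on such a boundary piece --- the only interesting case --- no choice of additive constants can achieve this: both $\varphi_i$ and $\varphi_j$ tend to $-\infty$ along $E$ there, and two plurisubharmonic functions with the \emph{same} pole set can diverge at completely unrelated rates. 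Already in the unit disc one can produce two subharmonic functions with pole set $\{0\}$, say $\varphi_1=\sum_k 2^{-k}\max(\log|z|,-b_{2k})$ and $\varphi_2=\sum_k 2^{-k}\max(\log|z|,-b_{2k+1})$ with $b_k$ growing fast enough (e.g. $b_k=4^{k^2}$), for which neither $\varphi_1\leq\varphi_2+C$ nor $\varphi_2\leq\varphi_1+C$ holds on any punctured neighbourhood of $0$, for any constant $C$. The same objection applies verbatim to your second induction over $k$: adding ``large multiples of $\phi$'' on collars adds a \emph{bounded} quantity there and cannot reorder two incomparable singularities of $\psi_k$ and $\psi_{k+1}$. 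Any domination of this kind can at best be hoped for after composing one competitor with a convex increasing reparametrization built from a compactness argument on sublevel sets, and that argument in turn needs continuity (as maps into $[-\infty,\infty)$) that you do not have; this is precisely the hard part of Col\c{t}oiu's theorem, which your plan assumes away.

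Two further steps are also unsound as stated. The assertion that the local models can be taken continuous off $E$, ``which is standard for a complete pluripolar set'', is not standard: a psh function that is finite off $E$ need not even be locally bounded below off $E$ (one can build finite subharmonic functions unbounded below on compact sets), and producing a defining function continuous, let alone smooth, outside $E$ is essentially part of what the theorem asserts --- so the argument becomes circular at this point. Finally, your last surgery --- gluing the Richberg smoothing to ``the unaltered singular behaviour'' near $E$ by one more regularized maximum --- would destroy rather than finish the proof: near $E$ the singular function tends to $-\infty$, so its maximum with any finite smooth function equals the smooth function there and erases the pole set, while a minimum is not psh. That particular step is repairable (apply Richberg on $X\!\setminus\!E$ with a bounded error term, note the result tends to $-\infty$ at $E$, and extend it across $E$ by the removable-singularity theorem for psh functions across closed pluripolar sets), but the gluing as you describe it does not do what you claim, and in any case the fatal obstruction remains the comparison step above.
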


As we shall see later, this result is crucial in the proof of Theorem $\ref{thm:Hartogs_PSH-manifold}$.

\subsection{Twisted Bochner-Kodaira-Nakano inequality}\label{subsect: Twisted-BKN}
Let $(X, \omega)$ be an $n$-dimensional K\"{a}hler manifold and $(L, h)$ be a Hermitian holomorphic line bundle over $X$. Given integers $0\leq p, q\leq n$, we let $L^2(X,\, \Lambda^{p,\, q} T^{\ast}X \otimes L;\, \omega,h)$ denote the space of $L$-valued $(p, q)$-forms $u$ with measurable complex coefficients on $X$,  satisfying
  $$
  \|u\|_{\omega,\,h}\coloneqq\bigg(\int_X |u|^2_{\omega,\,h}\,dV_{\omega}\bigg)^{1/2}<\infty,
  $$
where $|\,\ |^2_{\omega,\,h}$ is the pointwise norm on $\Lambda^{p,\, q} T^{\ast}X \otimes L$ induced by $\omega$ and $h$, and $dV_\omega=\omega^n/n!$ is the volume form on $X$ induced by $\omega$. We also denote by  $\langle\,\, ,\,\rangle_{\omega,\,h}$ (resp. $\langle\!\langle\,\, ,\,\rangle\!\rangle_{\omega,\,h}$)  the pointwise (resp. global) inner product corresponding to $|\,\ |^2_{\omega,\,h}$ (resp. $\|\,\ \|^2_{\omega,\,h}$). Obviously $\langle\!\langle\,\, ,\,\rangle\!\rangle_{\omega,\,h}$ makes $L^2(X,\, \Lambda^{p,\, q} T^{\ast}X \otimes L;\, \omega,h)$ a Hilbert space,   which contains as a dense subspace the space $\mathcal{D}(X,\, \Lambda^{p,\, q}T^{\ast}X \otimes L)$ of $L$-valued $C^{\infty}$ $(p, q)$-forms with compact support in $X$. For our later purposes, we also need to consider the maximal weak extension of the $\bar\partial$-operator to $L^2(X,\, \Lambda^{p,\, q} T^{\ast}X \otimes L;\, \omega,h)$, which we still denote by $\bar\partial$. Then it is a closed, densely defined operator, and so is its Hilbert adjoint ${\bar\partial}^\ast_{\omega,\,h}$ with respect to $\langle\!\langle\,\, ,\,\rangle\!\rangle_{\omega,\,h}$.

Sometimes it is necessary to consider $L$-valued $(p, q)$-forms with local square-integrability on $X$, instead of (global) square-integrability. The space consisting of all such forms is obviously independent of the choice of the $C^{\infty}$ metrics $\omega$ and $h$, and we simply denote it by $L^2_{\rm loc}(X,\, \Lambda^{p,\, q} T^{\ast}X\otimes L)$.

We now record the following a priori inequality due to Ohsawa--Takegoshi \cite{OhsawaTakegoshi87} (see also \cite{Ohsawa95, Demailly-Manivel_extension, Demaillybook2}), which plays a fundamental role in the whole $L^2$ theory of the $\bar\partial$-operator.

\begin{lemma}\label{lem:Twisted BKN ineq}
Let $(X, \omega)$ be an $n$-dimensional K\"{a}hler manifold and $(L, h)$ be a Hermitian holomorphic line bundle over $X$. Given two $C^{\infty}$ functions $\eta$, $\lambda>0$ on $X$, we then have
\begin{equation}\label{ineq: Ohsawa-inequality}
\begin{split}
 \big\|&\sqrt{\eta+\lambda}\,\bar{\partial}^\ast_{\omega,\,h} u\big\|^2_{\omega,\,h}+\big\|\sqrt{\eta}\bar{\partial}u\big\|^2_{\omega,\,h}\\
  &\;\geq \big\langle\negmedspace\big\langle \big[\eta \sqrt{-1}\Theta_h(L)-\sqrt{-1}\partial\bar{\partial}\eta
         -\lambda^{-1}\sqrt{-1}\partial\eta\wedge\bar{\partial}\eta,\, \Lambda_{\omega}\big]u,\, u\big\rangle\negmedspace\big\rangle_{\omega,\,h}
\end{split}
\end{equation}
for all $u\in\mathcal{D}(X,\, \Lambda^{n,\, q}T^{\ast}X \otimes L)$, $q\geq 1$, where $\sqrt{-1}\Theta_h(L)$ denotes the Chern curvature form of $(L, h)$ and $\Lambda_{\omega}$ is the pointwise adjoint of the Lefschetz operator $L_{\omega}=\omega\wedge\cdot$.
\end{lemma}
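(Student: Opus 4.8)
The plan is to derive this a priori estimate from the Bochner--Kodaira--Nakano (BKN) identity by the classical device of \emph{twisting} the identity with the weight $\eta$. Since the inequality is asserted only for $u\in\mathcal D(X,\Lambda^{n,q}T^\ast X\otimes L)$, no density or closed-range argument is needed: everything may be computed with smooth compactly supported forms, integrating by parts freely. Write $D'$ for the $(1,0)$-part of the Chern connection of $(L,h)$ and $D'^\ast$ for its formal adjoint. The first observation is that, because $u$ has bidegree $(n,q)$, one has $D'u=0$ identically (it would be an $(n+1,q)$-form), so the BKN identity collapses to
\[
\|\bar\partial u\|_{\omega,h}^2+\|\bar\partial^\ast_{\omega,h}u\|_{\omega,h}^2=\|D'^\ast u\|_{\omega,h}^2+\big\langle\!\big\langle[\sqrt{-1}\Theta_h(L),\Lambda_\omega]u,u\big\rangle\!\big\rangle_{\omega,h}.
\]

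Next I would insert the weight. From $\bar\partial(\eta v)=\bar\partial\eta\wedge v+\eta\bar\partial v$ and the analogous formula for $D'$, taking formal adjoints yields the commutation relations $\bar\partial^\ast_{\omega,h}(\eta u)=\eta\,\bar\partial^\ast_{\omega,h}u-(\bar\partial\eta\wedge\cdot)^\ast u$ and $D'^\ast(\eta u)=\eta\,D'^\ast u-(\partial\eta\wedge\cdot)^\ast u$, where $(\cdot)^\ast$ denotes the pointwise adjoint of exterior multiplication (interior product). Computing $\langle\!\langle\eta(\Delta''-\Delta')u,u\rangle\!\rangle_{\omega,h}$, with $\Delta''=\bar\partial\bar\partial^\ast_{\omega,h}+\bar\partial^\ast_{\omega,h}\bar\partial$ and $\Delta'=D'D'^\ast+D'^\ast D'$, and moving $\eta$ through the adjoints via these relations produces
\[
\big\langle\!\big\langle[\eta\sqrt{-1}\Theta_h(L),\Lambda_\omega]u,u\big\rangle\!\big\rangle_{\omega,h}=\|\sqrt\eta\,\bar\partial^\ast_{\omega,h}u\|_{\omega,h}^2+\|\sqrt\eta\,\bar\partial u\|_{\omega,h}^2-\|\sqrt\eta\,D'^\ast u\|_{\omega,h}^2+(\text{cross terms}),
\]
where the cross terms are pairings of $(\partial\eta\wedge\cdot)^\ast u$ and $(\bar\partial\eta\wedge\cdot)^\ast u$ against $\bar\partial u$, $\bar\partial^\ast_{\omega,h}u$ and $D'^\ast u$.

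The heart of the argument is to reorganize these single-derivative terms using the Kähler commutation identities (e.g.\ $[\Lambda_\omega,\partial\eta\wedge\cdot]=\sqrt{-1}(\bar\partial\eta\wedge\cdot)^\ast$ and its conjugate), so that they assemble precisely into the curvature contribution $-\langle\!\langle[\sqrt{-1}\partial\bar\partial\eta,\Lambda_\omega]u,u\rangle\!\rangle_{\omega,h}$ together with a single genuinely remaining gradient term that can be paired with $\bar\partial^\ast_{\omega,h}u$. I would then discard the favorable term $\|\sqrt\eta\,D'^\ast u\|_{\omega,h}^2\ge0$ and estimate the surviving term by Cauchy--Schwarz with the free parameter $\lambda$, bounding a contribution of the form $2\,\mathrm{Re}\,\langle\!\langle(\bar\partial\eta\wedge\cdot)^\ast u,\bar\partial^\ast_{\omega,h}u\rangle\!\rangle_{\omega,h}$ by $\lambda\|\bar\partial^\ast_{\omega,h}u\|_{\omega,h}^2+\lambda^{-1}\|(\bar\partial\eta\wedge\cdot)^\ast u\|_{\omega,h}^2$. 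The term $\lambda\|\bar\partial^\ast_{\omega,h}u\|_{\omega,h}^2$ merges with $\|\sqrt\eta\,\bar\partial^\ast_{\omega,h}u\|_{\omega,h}^2$ to give exactly $\|\sqrt{\eta+\lambda}\,\bar\partial^\ast_{\omega,h}u\|_{\omega,h}^2$, while the pointwise identity $|(\bar\partial\eta\wedge\cdot)^\ast u|^2=\langle[\sqrt{-1}\partial\eta\wedge\bar\partial\eta,\Lambda_\omega]u,u\rangle$, valid on $(n,q)$-forms, converts $\lambda^{-1}\|(\bar\partial\eta\wedge\cdot)^\ast u\|_{\omega,h}^2$ into the announced $\lambda^{-1}\langle\!\langle[\sqrt{-1}\partial\eta\wedge\bar\partial\eta,\Lambda_\omega]u,u\rangle\!\rangle_{\omega,h}$. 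Collecting terms gives the claimed inequality.

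The step I expect to be the main obstacle is the middle one: getting the Kähler-identity bookkeeping and all the signs exactly right, so that the single-derivative cross terms collapse cleanly into $-[\sqrt{-1}\partial\bar\partial\eta,\Lambda_\omega]$ with precisely one $\bar\partial\eta$-term surviving for the Cauchy--Schwarz step. This is where the conventions for the pointwise adjoints, the interior products, and the commutators must be tracked with care; by contrast the reduction to smooth compactly supported $u$, the discarding of $\|\sqrt\eta\,D'^\ast u\|_{\omega,h}^2$, and the final $\lambda$-optimization producing the coefficient $\eta+\lambda$ are comparatively routine. As this inequality is classical and due to Ohsawa--Takegoshi, I would also cross-check the final normalization against the statements in \cite{OhsawaTakegoshi87, Ohsawa95, Demaillybook2}.
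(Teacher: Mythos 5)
The paper offers no proof of this lemma at all --- it records the inequality as a known result of Ohsawa--Takegoshi \cite{OhsawaTakegoshi87}, pointing to \cite{Ohsawa95, Demailly-Manivel L^2 extension, Demaillybook2} for details --- and your outline is precisely the standard derivation found in those references: twist the Bochner--Kodaira--Nakano identity by $\eta$ (using that $D'u=0$ for bidegree reasons on $(n,q)$-forms and that the Hilbert adjoint coincides with the formal adjoint on compactly supported smooth forms, so no completeness or density argument is needed), reassemble the first-order cross terms via the K\"ahler commutation identities into $-[\sqrt{-1}\partial\bar{\partial}\eta,\Lambda_{\omega}]$ plus one surviving gradient term, estimate that term by the pointwise Cauchy--Schwarz inequality with the weight $\lambda$, and discard the favorable term $\big\|\sqrt{\eta}\,D'^{\ast}u\big\|^2_{\omega,\,h}\geq 0$. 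The key identities you invoke --- the commutation relations for multiplication by $\partial\eta$ and $\bar{\partial}\eta$, and the pointwise identity $\big|(\bar{\partial}\eta\wedge\cdot)^{\ast}u\big|^2_{\omega,\,h}=\big\langle[\sqrt{-1}\partial\eta\wedge\bar{\partial}\eta,\Lambda_{\omega}]u,u\big\rangle_{\omega,\,h}$ on $(n,q)$-forms --- are correct as stated, so the plan is sound and matches the proofs in the cited sources.
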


Inequality \eqref{ineq: Ohsawa-inequality} also applies to the case where $(L, h)$ is a Hermitian holomorphic {\it vector} bundle over $X$.  Since we only use the case of line bundles in this paper, we restrict ourselves to this particular case.

\section{An Ohsawa--Takegoshi type $L^2$ existence theorem}\label{sect:Ohsawa--Takegoshi existence}
In this section we present an Ohsawa--Takegoshi type $L^2$ existence theorem for the $\bar{\partial}$-operator on complete K\"ahler manifolds, which contains as a special case Demailly's far-reaching generalization of the  $L^2$ existence theorems due to H\"{o}rmander and Andreotti-Vesentini (cf. \cite[Th\'{e}or\`{e}me 5.1]{Demailly82} and also Remark \ref{rmk:supplemental remark} (iii)). Such an existence result is more or less known to experts, but since being unable to locate a good reference for its proof, we will include a detailed argument for the reader's convenience. (Here we are not claiming any credit.)

To begin with, we recall the notion of singular Hermitian metrics. Let $L$ be a holomorphic line bundle over a complex manifold $X$.
\begin{definition}
A (possibly) {\it singular Hermitian metric} $h$ on $L$ is a measurable Hermitian metric that can be expressed on $U$ as $e^{-\varphi_U}$, where $U\subset X$ is an open set over which $L$ is trivial, and $\varphi_U\in L^1_{{\rm loc}}(U)$.
\end{definition}

It is easy to see that in terms of a given smooth Hermitian metric $h_0$, every singular Hermitian metric $h$ on $L$ can be written exactly in the form $h=h_0e^{-\varphi}$, where $\varphi\in L^1_{{\rm loc}}(X)$; and the curvature current
 $$
 \sqrt {-1}\Theta_h(L)\coloneqq-\sqrt {-1}\partial\bar{\partial}\log h
 $$
of $(L,h)$ is semi-positive exactly when the weight $\varphi=-\log(h/h_0)$ of $h$ (with respect to $h_0$) satisfies
  $$
  \sqrt {-1}\Theta_{h_0}(L)+\sqrt{-1}\partial\bar{\partial}\varphi\geq 0 \quad {\rm on}\, \, X
  $$
in the sense of currents.

Given a singular Hermitian metric $h$ on $L$, we can also define $L^2(X,\, \Lambda^{p,\, q} T^{\ast}X \otimes L;\, \omega,h)$ and $\langle\!\langle\,\, ,\,\rangle\!\rangle_{\omega,\,h}$ in a similar way as in Subsection \ref{subsect: Twisted-BKN}.  We now state and prove the following

\begin{theorem}\label{thm:L^2 estimate of OT-type}
Let $X$ be an $n$-dimensional complete K\"ahler manifold with a {\rm(}not necessarily complete{\rm)} K\"ahler metric $\omega$ and $L$ be a holomorphic line bundle over $X$ with a singular Hermitian metric $h$. Suppose there are bounded $C^{\infty}$ functions $\eta$, $\lambda>0$ on $X$ such that
\begin{equation}\label{ineq: curvature condition}
  \eta\sqrt {-1}\Theta_h(L)-\sqrt{-1}\partial\bar{\partial}\eta-\lambda^{-1}\sqrt{-1}\partial\eta\wedge \bar{\partial}\eta\ge \Theta
\end{equation}
in the sense of currents, where $\Theta$ is a continuous positive $(1,1)$-form on $X$. Then for every $\bar{\partial}$-closed $v\in L^2_{\rm loc}(X,\, \Lambda^{n,\, q} T^{\ast}X \otimes L)$, $q\geq 1$, such that
  $$
  \int_X \big\langle[\Theta, \Lambda_{\omega}]^{-1}v,\, v\big\rangle_{\omega,\,h}\,dV_{\omega}<\infty,
  $$
there exists $u\in L^2(X,\, \Lambda^{n,\, q-1} T^{\ast}X \otimes L;\, \omega, h)$ such that $\bar{\partial}u=v$ and
\begin{equation}\label{ineq:L^2 estimate for solution}
  \int_X (\eta+\lambda)^{-1}|u|^2_{\omega,\,h}\,dV_{\omega}\leq \int_X \big\langle[\Theta, \Lambda_{\omega}]^{-1}v,\, v\big\rangle_{\omega,\,h}\,dV_{\omega}.
\end{equation}
\end{theorem}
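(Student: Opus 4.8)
The plan is to prove the theorem first for a \emph{smooth} Hermitian metric $h$ and a \emph{complete} K\"ahler metric $\omega$, where it reduces to the standard twisted $L^2$ machinery built on Lemma~\ref{lem:Twisted BKN ineq}, and then to discharge both restrictions by two successive regularizations. Throughout, the hypothesis \eqref{ineq: curvature condition} together with the positivity of $\Theta$ enters only through that a priori estimate.

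For the core case I would first recall that on $L$-valued $(n,q)$-forms with $q\ge 1$ the commutator $[\gamma,\Lambda_\omega]$ attached to a positive $(1,1)$-form $\gamma$ is positive definite and monotone in $\gamma$; hence \eqref{ineq: curvature condition} gives $[\Theta,\Lambda_\omega]\le[B,\Lambda_\omega]$ pointwise, where $B$ denotes the left-hand side of \eqref{ineq: curvature condition}. Writing $M:=\int_X\langle[\Theta,\Lambda_\omega]^{-1}v,v\rangle_{\omega,h}\,dV_\omega<\infty$, a pointwise Cauchy--Schwarz inequality in the inner product $[\Theta,\Lambda_\omega]$ gives, for every test form $\alpha$,
\[
|\langle\!\langle v,\alpha\rangle\!\rangle_{\omega,h}|^2\le M\int_X\langle[\Theta,\Lambda_\omega]\alpha,\alpha\rangle_{\omega,h}\,dV_\omega\le M\int_X\langle[B,\Lambda_\omega]\alpha,\alpha\rangle_{\omega,h}\,dV_\omega .
\]
When $\alpha$ is moreover $\bar\partial$-closed the term $\|\sqrt{\eta}\,\bar\partial\alpha\|^2_{\omega,h}$ in Lemma~\ref{lem:Twisted BKN ineq} vanishes, so the right-hand side is at most $M\,\big\|\sqrt{\eta+\lambda}\,\bar\partial^\ast_{\omega,h}\alpha\big\|^2_{\omega,h}$, yielding the key estimate
\[
|\langle\!\langle v,\alpha\rangle\!\rangle_{\omega,h}|^2\le M\,\big\|\sqrt{\eta+\lambda}\,\bar\partial^\ast_{\omega,h}\alpha\big\|^2_{\omega,h},\qquad \bar\partial\alpha=0 .
\]
Defining the functional $\sqrt{\eta+\lambda}\,\bar\partial^\ast_{\omega,h}\alpha\mapsto\langle\!\langle v,\alpha\rangle\!\rangle_{\omega,h}$ and invoking Hahn--Banach and Riesz representation produces $g$ with $\|g\|^2_{\omega,h}\le M$ and $\langle\!\langle\bar\partial^\ast_{\omega,h}\alpha,u\rangle\!\rangle_{\omega,h}=\langle\!\langle v,\alpha\rangle\!\rangle_{\omega,h}$ for $u:=\sqrt{\eta+\lambda}\,g$; an orthogonal decomposition $\alpha=\alpha_1+\alpha_2$ with $\alpha_1\in\ker\bar\partial$ and $\alpha_2\in(\ker\bar\partial)^\perp\subset\ker\bar\partial^\ast_{\omega,h}$ upgrades this identity to all $\alpha\in\operatorname{Dom}(\bar\partial^\ast_{\omega,h})$, whence $\bar\partial u=v$ and $\int_X(\eta+\lambda)^{-1}|u|^2_{\omega,h}\,dV_\omega=\|g\|^2_{\omega,h}\le M$. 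Completeness of $\omega$ is used at exactly one point: Lemma~\ref{lem:Twisted BKN ineq}, stated for compactly supported smooth forms, must be extended to all of $\operatorname{Dom}(\bar\partial)\cap\operatorname{Dom}(\bar\partial^\ast_{\omega,h})$, and this density follows from the existence of cutoff functions with bounded gradient on a complete manifold.

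To drop completeness of $\omega$, I would fix a complete K\"ahler metric $\widehat\omega$ on $X$ (available since $X$ is complete K\"ahler) and run the core argument with the complete metrics $\omega_\varepsilon:=\omega+\varepsilon\widehat\omega$. Because $v$ and $u$ are of bidegree $(n,\cdot)$, the integrands $\langle[\Theta,\Lambda_{\omega_\varepsilon}]^{-1}v,v\rangle_{\omega_\varepsilon,h}\,dV_{\omega_\varepsilon}$ are monotone in $\varepsilon$ and dominated by their $\omega$-counterparts, so the solutions $u_\varepsilon$ satisfy a uniform bound; since $\eta,\lambda$ are bounded, this controls $\int|u_\varepsilon|^2_{\omega_\varepsilon,h}$ and a weak-compactness argument gives $u=\lim u_\varepsilon$ solving $\bar\partial u=v$, with the sharp estimate recovered from lower semicontinuity of the norm and monotone convergence on the right. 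To drop the singularity of $h$, I would write $h=h_0e^{-\varphi}$ and apply Demailly's \emph{global} regularization \cite{Demailly92,Demailly94} to obtain smooth metrics $h_\nu\to h$ whose curvature satisfies \eqref{ineq: curvature condition} up to a controlled loss $-\varepsilon_\nu\widehat\omega$; replacing $\Theta$ by $\Theta_\nu:=\Theta-\eta\varepsilon_\nu\widehat\omega$ (still positive for large $\nu$), solving by the smooth case, and letting $\nu\to\infty$ restores the constant $1$ because $\Theta_\nu\to\Theta$.

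The main obstacle is the singular-metric limit. Unlike the weakly pseudoconvex K\"ahler setting of Zhou--Zhu \cite{ZZ_optimal}, where Demailly's \emph{local} approximation with arbitrarily small curvature loss applies, on a merely complete K\"ahler $X$ one is forced to use the global approximation, whose loss of positivity is far less controllable. Consequently the clean restriction to $\ker\bar\partial$ is no longer available at the approximate level: following Demailly \cite{Demailly82}, I expect instead to formulate the estimate on the product space via $\alpha\mapsto\big(\sqrt{\eta+\lambda}\,\bar\partial^\ast_{\omega,h}\alpha,\ \sqrt{\eta}\,\bar\partial\alpha\big)$, which produces a solution carrying two correcting terms, and then to remove the spurious one by an induction/iteration scheme driven by the closedness of $v$. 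Arranging that the two limit procedures ($\varepsilon\to0$ and $\nu\to\infty$) are compatible, and that the correction terms tend to zero in the appropriate weighted norms without spoiling the constant in \eqref{ineq:L^2 estimate for solution}, is where the real work will lie.
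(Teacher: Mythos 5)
Your overall plan coincides with the paper's: reduce to complete $\omega$ by the monotonicity trick, treat the smooth-metric case by the twisted a priori inequality with restriction to $\ker\bar\partial$, and attack the singular metric via Demailly's global approximation. But there are two problems. First, the concrete regularization step you propose would fail: Demailly's global approximation theorem produces an increasing sequence $h_\nu\nearrow h$ of smooth metrics whose curvature satisfies \eqref{ineq: curvature condition} only up to an error $-\delta_\nu\omega$, where the $\delta_\nu$ are continuous \emph{functions} decreasing to $0$ merely almost everywhere on $X$ --- not constants $\varepsilon_\nu\to 0$ times a fixed metric. Hence $\Theta_\nu:=\Theta-\eta\delta_\nu\omega$ need not be positive for \emph{any} $\nu$ (the loss is not uniformly small on the noncompact $X$), so the smooth-case theorem can never be applied to the approximants as you state, and you cannot ``restore the constant $1$'' simply by letting $\nu\to\infty$. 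This uncontrollable loss is precisely what distinguishes the complete K\"ahler case from the weakly pseudoconvex one.

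Second, your closing paragraph concedes this and redirects to Demailly's scheme --- an estimate producing a solution with correcting terms, removed by induction on the degree --- which is indeed exactly what the paper does: a downward induction on $q$, with base case $q=n$ where the $\bar\partial w$ term is automatically absent, and an inductive step in which the hypothesis for $(n,q+1)$-forms is used to solve $\bar\partial v_j=\bar\partial\chi_j\wedge v$ with norm $O(1/j)$, permitting restriction to $\ker\bar\partial$ via the orthogonal projection $P_k$. But you explicitly leave ``the real work'' undone, and that work is the heart of the proof: after the Riesz representation one obtains $\chi_j v=\bar\partial u_{jk}+\sqrt{q}\,P_k(\chi_j\delta_k^{1/2}v_{jk})+\sqrt{2C/j}\,w_{jk}$, and one must show that the middle term tends to $0$ weakly as $k\to\infty$. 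This is delicate because the forms $f_{jk}:=P_k(\chi_j\delta_k^{1/2}v_{jk})$ live in Hilbert spaces whose inner products depend on $k$; the paper compares all the metrics $h_k$ to $h_1$ through positive Hermitian operators $H_k$, proves $H_kf_{jk}\rightharpoonup Hf_j$ in $L^2(\cdot)_1$, and then combines $\bar\partial f_{jk}=0$ with dominated convergence (using $\delta_k\to 0$ a.e.) to force $\|f_j\|=0$. Without this argument, or a substitute for it, your proposal is an outline of the correct strategy rather than a proof.
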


We are of course mainly interested in the case $q=1$ but the general statement is needed in the proof.

\begin{proof}
First of all, a standard monotonicity argument as in \cite{Demailly82} allows us to reduce the theorem to the case when $\omega$ is complete, which we shall henceforward assume.

We next make use of Demailly's approximation theorem \cite[Th\'{e}or\`{e}me 9.1]{Demailly82}, according to which there is an increasing sequence $\{h_j\}_{j\in \mathbb N}$ of $C^{\infty}$ Hermitian metrics on $L$ such that
\begin{enumerate}[leftmargin=2.0pc, parsep=2pt]
  \item [{\rm(i)}]  $h_j\nearrow h$ on $X$ as $j\to\infty$, and
  \item [{\rm(ii)}] the curvature of each $h_j$ satisfies
       \begin{equation}\label{ineq: curvature-ineq}
       \eta\sqrt{-1}\Theta_{h_j}(L)-\sqrt{-1}\partial\bar{\partial}\eta-\lambda^{-1}\sqrt{-1}\partial\eta\wedge \bar{\partial}\eta\geq \Theta-\delta_j\omega,
       \end{equation}
       where $\{\delta_j\}_{j\in \mathbb N}$ is a decreasing sequence of continuous functions on $X$ that converges to $0$ {\it almost everywhere} on $X$ as $j\to\infty$.
\end{enumerate}
Since $(X, \omega)$ is complete, we can easily construct a $C^{\infty}$ exhaustion function $\rho$ for $X$ with $|d\rho|_{\omega}\leq 1$ on $X$. Choose also a $C^{\infty}$ cut-off function $0\leq \chi \leq 1$ on $\mathbb R$ satisfying  $\chi|_{(-\infty,\,1)}=1$, $\chi|_{(2,\,\infty)}=0$   and  $|\chi'|\leq 2$ on $\mathbb R$. Define
  $$
  \chi_j\coloneqq\chi(\rho/j),\quad j\in \mathbb N.
  $$

We now proceed to establish several  necessary estimates. For notational simplicity, we set
  $$
  L^2(X,\, \Lambda^{n,\, q} T^{\ast}X \otimes L)
  =L^2(X,\, \Lambda^{n,\, q} T^{\ast}X \otimes L;\, \omega, h),
  $$
  $$
  L^2(X,\, \Lambda^{n,\, q} T^{\ast}X \otimes L)_k
  =L^2(X,\, \Lambda^{n,\, q} T^{\ast}X \otimes L;\, \omega, h_k)
  $$
and
  $$
  \langle\!\langle\,\, ,\,\rangle\!\rangle=\langle\!\langle\,\, ,\,\rangle\!\rangle_{\omega,\,h},
  \quad \langle\!\langle\,\, ,\,\rangle\!\rangle_k=\langle\!\langle\,\, ,\,\rangle\!\rangle_{\omega,\,h_k}
  $$
as well as ${\bar\partial}^\ast_k={\bar\partial}^\ast_{\omega,\,h_k}$ for $k\in\mathbb N$. The same simplification applies to other notation, and we may even omit subscripts when no confusion arises. Let $j,\, k\in \mathbb N$. Given $v$ as in the theorem  we have
\begin{align}\label{ineq:basic-estimate1}
\begin{split}
  \big|\langle\!\langle \chi_j v, w\rangle\!\rangle_k\big|^2
  &\leq \big\langle\negmedspace\big\langle [\Theta,\Lambda]^{-1}v,\,
        v\big\rangle\negmedspace\big\rangle_k\,
        \big\langle\negmedspace\big\langle [\Theta,\Lambda](\chi_j w),\,
        \chi_j w\big\rangle\negmedspace\big\rangle_k\\
  &\leq \big\langle\negmedspace\big\langle [\Theta,\Lambda]^{-1}v,\,
        v\big\rangle\negmedspace\big\rangle
        \big\langle\negmedspace\big\langle [\Theta,\Lambda](\chi_j w),\,
        \chi_j w\big\rangle\negmedspace\big\rangle_k\\
\end{split}
\end{align}
for all $w\in {\rm Dom}\,\bar{\partial}\cap {\rm Dom}\,{\bar\partial}^\ast_k$, in view of the Cauchy--Schwarz inequality and the fact that $h_k\leq h$. Since $\chi_j w\in {\rm Dom}\,\bar{\partial}\cap
{\rm Dom}\,{\bar\partial}^\ast_k$ as easily checked by definition, Ohsawa--Takegoshi's a priori inequality \eqref{ineq: Ohsawa-inequality} together with
\eqref{ineq: curvature-ineq} and Andreotti--Vesentini's density theorem (see, e.g., \cite[Chap. VIII]{Demaillybook}) implies
\begin{align}\label{ineq: apriori-inequality with error term}
 \begin{split}
  \big\|\sqrt{\eta+\lambda}\,\bar{\partial}^\ast_k
  (\chi_j w)\big\|^2_k+\big\|\sqrt{\eta}\,\bar{\partial}(\chi_j w)\big\|^2_k
  & \geq \big\langle\negmedspace\big\langle [\Theta-\delta_k\omega,\, \Lambda](\chi_j w),\, \chi_j w\big\rangle\negmedspace\big\rangle_k\\
  &=\big\langle\negmedspace\big\langle [\Theta,\, \Lambda](\chi_j w),\, \chi_j w\big\rangle\negmedspace\big\rangle_k-q\big\|\chi_j\delta_k^{1/2} w\big\|^2_k,
\end{split}
\end{align}
where the last equality follows from the fact that $[\omega, \Lambda]=q\,{\rm Id}$ on $\Lambda^{n,\, q} T^{\ast}X$. Observe also that
   $$
   \bar{\partial}(\chi_j w)=\chi_j\bar{\partial}w+\bar{\partial}\chi_j \wedge w
   $$
and
   $$
   \bar{\partial}^\ast_k(\chi_j w)=\chi_j\bar{\partial}^\ast_kw
   -({\rm grad}\, \chi_j)^{0,\, 1}\lrcorner\, w,
   $$
where $\lrcorner$ denotes the contraction operator and $({\rm grad}\, \chi_j)^{0,\, 1}$ is the $(0,1)$-part of the gradient of $\chi_j$ under the metric corresponding to $\omega$. Substituting these two identities into \eqref{ineq: apriori-inequality with error term} and applying the Cauchy--Schwarz inequality, we get
\begin{align}\label{ineq:basic-estimate2}
\begin{split}
  \big\langle\negmedspace\big\langle [\Theta,\, \Lambda](\chi_j w),\, \chi_j w
  \big\rangle\negmedspace\big\rangle_k
  &\leq q\big\|\chi_j\delta_k^{1/2} w\big\|^2_k
   +\big(1+\frac 1j\big) \Big(\big\|\sqrt{\eta+\lambda}\,\bar{\partial}^\ast_k
         w\big\|^2_k+\big\|\sqrt{\eta}\,\bar{\partial}w\big\|^2_k\Big)\\
  &\quad\; +(1+j)\Big(\big\|\sqrt{\eta+\lambda}\,({\rm grad}\, \chi_j)^{0,\, 1}\lrcorner\,
   w\big\|^2_k+\big\|\sqrt{\eta}\,\bar{\partial}\chi_j \wedge w\big\|^2_k\Big)\\
  &\leq \big(1+\frac1j\big)\Big(
     \big\|\sqrt{\eta+\lambda}\,\bar{\partial}^\ast_k w\big\|^2_k
     +\big\|\sqrt{\eta}\,\bar{\partial}w\big\|^2_k
     +\frac Cj \|w\|^2_k\Big)\\
  &\quad\; +q\big\|\chi_j\delta_k^{1/2} w\big\|^2_k
\end{split}
\end{align}
with $C=2 \sup_X(2\eta+\lambda)$, where in the last inequality we have used the facts that
  $$
  \big|\bar{\partial}\chi_j \wedge w\big|_k
  \leq |\bar{\partial}\chi_j||w|_k
  \leq \frac{\sqrt 2}{j} |w|_k
  $$
and
  $$
  \big|({\rm grad}\, \chi_j)^{0,\, 1}\lrcorner\, w\big|_k
  \leq |\bar{\partial}\chi_j||w|_k
  \leq \frac{\sqrt 2}{j} |w|_k.
  $$
Now combining \eqref{ineq:basic-estimate1} with \eqref{ineq:basic-estimate2} we conclude that for $j,\, k\in \mathbb N$ the estimate
\begin{equation}\label{ineq:basic-estimate3}
  \big|\langle\!\langle \chi_j v, w\rangle\!\rangle_k\big|^2
    \leq C_j\Big(\big\|\sqrt{\eta+\lambda}\,\bar{\partial}^\ast_k w\big\|^2_k
                   +\big\|\sqrt{\eta}\,\bar{\partial}w\big\|^2_k
             +q\big\|\chi_j\delta_k^{1/2} w\big\|^2_k+\frac{C}{j}\|w\|^2_k\Big)
\end{equation}
holds for all $w\in {\rm Dom}\,\bar{\partial}\cap {\rm Dom}\,{\bar\partial}^\ast_k$, with
   $$
   C_j\coloneqq\big(1+\frac 1j\big)\big\langle\negmedspace\big\langle [\Theta,\Lambda]^{-1}v,\,
        v\big\rangle\negmedspace\big\rangle.
   $$
This estimate enables us to prove the theorem by induction on $n-q$, as we shall now explain. The idea is due to Demailly \cite{Demailly82}, as mentioned in the Introduction.

\smallskip
When $q=n$, \eqref{ineq:basic-estimate3} reads
   $$
   \big|\langle\!\langle \chi_j v, w\rangle\!\rangle_k\big|^2
    \leq C_j\Big( \big\|\sqrt{\eta+\lambda}\,\bar{\partial}^\ast_k w\big\|^2_k
                 +q\big\|\chi_j\delta_k^{1/2} w\big\|^2_k+\frac{C}{j}\|w\|^2_k \Big)
   $$
for all $w\in {\rm Dom}\,{\bar\partial}^\ast_k$. This shows that the anti-linear functional
   $
   w\mapsto \langle\!\langle \chi_j v, w\rangle\!\rangle_k
   $
is continuous on the Hilbert space
   $$
   \Big({\rm Dom}\,{\bar\partial}^\ast_k,\,
   \big\|\sqrt{\eta+\lambda}\,\bar{\partial}^\ast_k\cdot\big\|^2_k
                   +q\big\|\chi_j\delta_k^{1/2} \cdot\big\|^2_k+\frac{C}{j}\|\cdot\|^2_k
   \Big)
   $$
with norm at most $\sqrt{C_j}$. Thus by the Riesz representation theorem, there exist
   $$
   u_{jk}\in L^2(X,\, \Lambda^{n,\, n-1} T^{\ast}X \otimes L)_k
   $$
and
   $$
   v_{jk},\, w_{jk}\in L^2(X,\, \Lambda^{n,\, n} T^{\ast}X \otimes L)_k
   $$
such that
\begin{equation}\label{ineq:norm-estimate1}
   \big\|(\eta+\lambda)^{-\frac12}u_{jk}\big\|^2_k+\|v_{jk}\|^2_k
   +\|w_{jk}\|^2_k\leq C_j
\end{equation}
and
   $$
   \langle\!\langle u_{jk}, {\bar\partial}^\ast_k w \rangle\!\rangle_k
    +\sqrt{q}\,\big\langle\negmedspace\big\langle v_{jk},\,\chi_j\delta_k^{1/2} w\big\rangle\negmedspace\big\rangle_k
    +\sqrt{C/j}\,\langle\!\langle w_{jk}, w\rangle\!\rangle_k
    =\langle\!\langle \chi_j v, w\rangle\!\rangle_k
   $$
for all $w\in {\rm Dom}\,{\bar\partial}^\ast_k$. This means that
$u_{jk}\in {\rm Dom}\,{\bar\partial}^{\ast\ast}_k={\rm Dom}\, \bar\partial$ and
   $$
   \bar\partial u_{jk}+\sqrt{q}\,\chi_j\delta_k^{1/2} v_{jk}+\sqrt{C/j}\,w_{jk}=\chi_j v
   $$
for all $j,\, k\in \mathbb N$. Moreover, since $\{h_k\}_{k\in \mathbb N}$ is increasing,  \eqref{ineq:norm-estimate1} and the Banach--Alaoglu theorem imply that $\{u_{jk}\}_{k\in\mathbb N}$ (resp. $\{w_{jk}\}_{k\in\mathbb N}$) has a subsequence that converges weakly to some $u_j$ (resp. $w_j$) in $L^2_{\rm loc}(X,\,\Lambda^{n,\, n-1} T^{\ast}X \otimes L)$ (resp. $L^2_{\rm loc}(X,\,\Lambda^{n,\, n} T^{\ast}X \otimes L)$). Also since $\{\delta_k\}_{k\in \mathbb N}$  decreases to $0$ almost everywhere on $X$, it follows readily from \eqref{ineq:norm-estimate1}, the Cauchy--Schwarz inequality and the dominated convergence theorem that $\big\{ \chi_j\delta_k^{1/2} v_{jk}\big\}_{k\in\mathbb N}$ converges to $0$ in $L^1_{\rm loc}(X,\,\Lambda^{n,\, n} T^{\ast}X \otimes L)$. Consequently
   $$
   \bar\partial u_j+\sqrt{C/j}\,w_j=\chi_j v
   $$
and
 \begin{align*}
  \begin{split}
    \int_{\Omega} \big((\eta+\lambda)^{-1}|u_j|^2_l+|w_j|^2_l\big)\,dV
    &\leq \limsup\limits_{k\to\infty} \int_{\Omega}
          \big((\eta+\lambda)^{-1}|u_{jk}|^2_l+|w_{jk}|^2_l\big)\,dV\\
    &\leq \limsup\limits_{k\to\infty}
           \big(\big\|(\eta+\lambda)^{-\frac12}u_{jk}\big\|^2_k
           +\|w_{jk}\|^2_k\big)\\
    &\leq C_j
  \end{split}
\end{align*}
for all $j,\, l\in \mathbb N$ and open sets $\Omega\subset\subset X$. Letting first $l\to \infty$ and then $\Omega\to X$ we obtain
  $$
  \big\|(\eta+\lambda)^{-\frac12}u_j\big\|^2+\|w_j\|^2 \leq C_j
  $$
for all $j\in \mathbb N$, in view of Levi's theorem. Taking a weak limit point $u$ of $\{u_j\}_{j\in\mathbb N}$
in $L^2_{\rm loc}(X,\,\Lambda^{n,\, n-1} T^{\ast}X \otimes L)$, we then conclude immediately that $\bar{\partial}u=v$ and
  $$
  \big\|(\eta+\lambda)^{-\frac12}u\big\|^2
  \leq \lim\limits_{j\to\infty} C_j
  =\big\langle\negmedspace\big\langle [\Theta,\Lambda]^{-1}v,\,
        v\big\rangle\negmedspace\big\rangle.
  $$
This completes the proof of the theorem for the case $q=n$.

\smallskip
Now assume that $1\leq q\leq n-1$ and that the conclusion of the theorem is true for all $\bar{\partial}$-closed $(n, q+1)$-forms with the prescribed integrability. We proceed to show that the conclusion is also true for all $(n, q)$-forms with the same property. Let $v$ be such a form. Then by the induction hypothesis, for every $j\in \mathbb N$ there exists $v_j\in L^2(X,\, \Lambda^{n,\, q} T^{\ast}X \otimes L)$ such that
  $$
  \bar\partial v_j=\bar\partial (\chi_j v)=\bar{\partial}\chi_j \wedge v
  $$
and
\begin{align}\label{ineq:modification-estimate}
\begin{split}
  \big\|(\eta+\lambda)^{-\frac12} v_j\big\|^2
  &\leq \big\langle\negmedspace\big\langle
        [\Theta,\Lambda]^{-1}(\bar{\partial}\chi_j \wedge v),\,
        \bar{\partial}\chi_j \wedge v\big\rangle\negmedspace\big\rangle\\
  &\leq \sup_X |\bar{\partial}\chi_j|^2\, \big\langle\negmedspace\big\langle
        [\Theta,\Lambda]^{-1}v,\, v\big\rangle\negmedspace\big\rangle\\
  &\leq \frac{2}{j^2}\big\langle\negmedspace\big\langle
        [\Theta,\Lambda]^{-1}v,\, v\big\rangle\negmedspace\big\rangle.
\end{split}
\end{align}
Also for every $w\in {\rm Dom}\,{\bar\partial}^\ast_k$ we have the orthogonal decomposition $w=w_1+w_2$, where
  $$
  w_1\in {\rm Ker}\,\bar\partial \quad {\rm and} \quad
  w_2\in ({\rm Ker}\,\bar\partial)^\perp \subset {\rm Ker}\,{\bar\partial}^\ast_k.
  $$
Since
  $$
  v_j\in L^2(X,\, \Lambda^{n,\, q} T^{\ast}X \otimes L)
  \subset L^2(X,\, \Lambda^{n,\, q} T^{\ast}X \otimes L)_k
  $$
and $\chi_j v-v_j\in {\rm Ker}\,\bar\partial$, it then follows that
  $$
  \langle\!\langle \chi_j v-v_j, w\rangle\!\rangle_k
  =\langle\!\langle \chi_j v-v_j, w_1\rangle\!\rangle_k,
  $$
whence
  $$
  \langle\!\langle \chi_j v, w\rangle\!\rangle_k
  =\langle\!\langle v_j, w\rangle\!\rangle_k
    +\langle\!\langle \chi_j v-v_j, w_1\rangle\!\rangle_k
  =\langle\!\langle \chi_j v, w_1\rangle\!\rangle_k
    +\langle\!\langle v_j, w_2\rangle\!\rangle_k.
  $$
Applying the Cauchy--Schwarz inequality to this and taking \eqref{ineq:modification-estimate} into account, we get
\begin{align*}\label{ineq:basic-estimate4}
  \begin{split}
  \big|\langle\!\langle \chi_j v, w\rangle\!\rangle_k\big|^2
  &\leq \big(1+\frac1j\big)
    \big|\langle\!\langle \chi_j v, w_1\rangle\!\rangle_k\big|^2
     +(1+j)\|v_j\|^2 \|w_2\|^2_k\\
  &\leq \big(1+\frac1j\big)
    \Big(\big|\langle\!\langle \chi_j v, w_1\rangle\!\rangle_k\big|^2
         +\frac2j \sup_X(\eta+\lambda) \big\langle\negmedspace\big\langle
         [\Theta,\Lambda]^{-1}v,\, v\big\rangle\negmedspace\big\rangle
         \|w\|^2_k\Big).
  \end{split}
\end{align*}
On the other hand, since $w_1\in {\rm Ker}\,\bar{\partial}\cap {\rm Dom}\,{\bar\partial}^\ast_k$
and  $w_2\in {\rm Ker}\,{\bar\partial}^\ast_k$, it follows from \eqref{ineq:basic-estimate3} that
\begin{align*}
  \begin{split}
   \big|\langle\!\langle \chi_j v, w_1\rangle\!\rangle_k\big|^2
   &\leq C_j\Big(\big\|\sqrt{\eta+\lambda}\,\bar{\partial}^\ast_k w_1\big\|^2_k
                   +q\big\|\chi_j \delta_k^{1/2} w_1\big\|^2_k+\frac{C}{j}\|w_1\|^2_k\Big)\\
   &\leq C_j\Big(\big\|\sqrt{\eta+\lambda}\,\bar{\partial}^\ast_k w\big\|^2_k
                   +q\big\|\chi_j \delta_k^{1/2} w_1\big\|^2_k+\frac{C}{j}\|w\|^2_k\Big)
  \end{split}
\end{align*}
Combining this with the preceding inequalities we arrive at
\begin{equation*}\label{ineq: Final basic-estimate}
  \big|\langle\!\langle \chi_j v, w\rangle\!\rangle_k\big|^2
  \leq \big(1+\frac1j\big)C_j\Big(
          \big\|\sqrt{\eta+\lambda}\,\bar{\partial}^\ast_k w\big\|^2_k
         + q \big\|\chi_j \delta_k^{1/2} P_k w \big\|^2_k+ \frac{2C}{j}\|w\|^2_k
        \Big)
\end{equation*}
for all $w\in {\rm Dom}\,{\bar\partial}^\ast_k$, where $P_k$ denotes the orthogonal projection from $L^2(X,\, \Lambda^{n,\, q} T^{\ast}X \otimes L)_k$ onto ${\rm Ker}\,\bar\partial$. Then the Riesz representation theorem applies and implies that there exist
   $$
   u_{jk}\in L^2(X,\, \Lambda^{n,\, q-1} T^{\ast}X \otimes L)_k
   $$
and
   $$
   v_{jk},\, w_{jk}\in L^2(X,\, \Lambda^{n,\, q} T^{\ast}X \otimes L)_k
   $$
such that
\begin{equation}\label{ineq:norm-estimate2}
   \big\|(\eta+\lambda)^{-\frac12}u_{jk}\big\|^2_k+\|v_{jk}\|^2_k
   +\|w_{jk}\|^2_k\leq \big(1+\frac1j\big) C_j
\end{equation}
and
   $$
   \langle\!\langle u_{jk}, {\bar\partial}^\ast_k w \rangle\!\rangle_k
    +\sqrt{q}\,\big\langle\negmedspace\big\langle v_{jk},\,\chi_j \delta_k^{1/2}
     P_k w\big\rangle\negmedspace\big\rangle_k
    +\sqrt{2C/j}\,\langle\!\langle w_{jk}, w\rangle\!\rangle_k
    =\langle\!\langle \chi_j v, w\rangle\!\rangle_k
   $$
for all $w\in {\rm Dom}\,{\bar\partial}^\ast_k$. It follows that
   $$
   \bar\partial u_{jk}+\sqrt{q} P_k(\chi_j \delta_k^{1/2} v_{jk})+\sqrt{2C/j}\,w_{jk}=\chi_j v
   $$
for all $j,\, k\in \mathbb N$. Now what remains is essentially to show that for every $j\in \mathbb N$,
\begin{equation}\label{eq:weak-convergence}
  f_{jk}\coloneqq P_k(\chi_j\delta_k^{1/2} v_{jk})\to 0 \quad {\rm weakly\, \, in}\quad
  L^2_{\rm loc}(X,\,\Lambda^{n,\, q} T^{\ast}X \otimes L)
\end{equation}
as $k\to \infty$. Once this is proved, an argument analogous to the one in the second half of the previous paragraph leads to the existence of a solution $u$ to equation $\bar{\partial}u=v$ together with the estimate
  $$
  \big\|(\eta+\lambda)^{-\frac12}u\big\|^2
  \leq \big\langle\negmedspace\big\langle [\Theta,\Lambda]^{-1}v,\, v\big\rangle\negmedspace\big\rangle,
  $$
thus completing the induction procedure (and hence the proof of the theorem).

\smallskip
To prove \eqref{eq:weak-convergence}, observe first that by \eqref{ineq:norm-estimate2},
\begin{equation}\label{ineq:norm-estimate3}
  \|f_{jk}\|_k\leq \big\|\chi_j\delta_k^{1/2} v_{jk}\big\|_k
  \leq \sup_X (\chi_j\delta_1^{1/2})\,\|v_{jk}\|_k\leq {\rm const}_j
\end{equation}
for all $k\in\mathbb N$, where ${\rm const}_j$ denotes a constant depending only on $j$. This observation is crucial in the following argument and will be used repeatedly. For the time being, combined with the Banach--Alaoglu theorem, it implies that the set of weak limit points of $\{f_{jk}\}_{k\in\mathbb N}$ is non-empty. Thus what we need to show is equivalent to saying that $0$ is the only element of the set. To establish the validity of the latter, we may assume without loss of generality that the sequence $\{f_{jk}\}_{k\in\mathbb N}$ itself is weakly convergent in  $L^2_{\rm loc}(X,\,\Lambda^{n,\, q} T^{\ast}X \otimes L)$, with limit, say $f_j$. Let $H_k$ (resp. $H$) denote the only positive definite Hermitian operator on $(L, h_1)$ that satisfies
 $|H_k\zeta|_1=|\zeta|_k$ (resp. $|H\zeta|_1=|\zeta|$) for all $\zeta\in L$.
Then
  $$
  {\rm Id}_L\leq H_k\leq H_{k+1}
  $$
for all $k\in\mathbb N$, and $\{H_k\}_{k\in\mathbb N}$ converges to $H$ almost everywhere on $X$. Extending $H_k$ and $H$ to $\Lambda^{n,\, q} T^{\ast}X \otimes L$ in a natural way,  we claim that
  $$
  H_k f_{jk}\to Hf_j \quad {\rm weakly\, \, in}\quad
  L^2_{\rm loc}(X,\,\Lambda^{n,\, q} T^{\ast}X \otimes L)
  $$
as $k\to \infty$. To show this, it suffices to check that $Hf_j$ is the only weak limit point of
$\{H_k f_{jk}\}_{k\in\mathbb N}$. Again by \eqref{ineq:norm-estimate3} and the Banach--Alaoglu theorem, we may assume that  the sequence $\{H_k f_{jk}\}_{k\in\mathbb N}$ itself is weakly convergent in  $L^2_{\rm loc}(X,\,\Lambda^{n,\, q} T^{\ast}X \otimes L)$, with limit, say  $g_j$.
Observe that for every compactly supported
$\varphi\in L^2(X,\,\Lambda^{n,\, q} T^{\ast}X \otimes L)_1$,
   $$
   \big|\big\langle\negmedspace\big\langle H_k f_{jk},\, (H_k^{-1}-H^{-1})\varphi\big\rangle\negmedspace\big\rangle_1\big|
   \leq \|f_{jk}\|_k\big\|(H_k^{-1}-H^{-1})\varphi\big\|_1
   \leq {\rm const}_j \big\|(H_k^{-1}-H^{-1})\varphi\big\|_1
   \to 0
   $$
as $k\to \infty$, in view of \eqref{ineq:norm-estimate3}, the Cauchy--Schwarz inequality and the dominated convergence theorem. Then
  $$
  \langle\!\langle f_j, \varphi\rangle\!\rangle_1
   =\lim\limits_{k\to \infty} \langle\!\langle H_k f_{jk},\, H_k^{-1} \varphi\rangle\!\rangle_1
   =\lim\limits_{k\to \infty} \langle\!\langle H_k f_{jk},\, H^{-1} \varphi\rangle\!\rangle_1
   =\langle\!\langle g_j, H^{-1} \varphi \rangle\!\rangle_1,
  $$
hence $g_j=Hf_j$ and the claim follows. Moreover by \eqref{ineq:norm-estimate3} and the definition of $H_k$, we see that $Hf_j\in L^2(X,\,\Lambda^{n,\, q} T^{\ast}X \otimes L)_1$. Now using \eqref{ineq:norm-estimate3} again and a standard $\varepsilon/3$-argument, we further conclude that
  $$
  H_k f_{jk}\to Hf_j \quad {\rm weakly\, \, in}\quad
  L^2 (X,\,\Lambda^{n,\, q} T^{\ast}X \otimes L)_1
  $$
as $k\to \infty$. In particular
   $$
   \|f_j\|^2=\|Hf_j\|^2_1=\lim\limits_{k\to \infty} \langle\!\langle H_k f_{jk}, Hf_j\rangle\!\rangle_1
   =\lim\limits_{k\to \infty} \langle\!\langle H_k f_{jk}, H_kf_j\rangle\!\rangle_1,
   $$
where the last equality follows from the fact that
   $$
   \big|\big\langle\negmedspace\big\langle H_k f_{jk},\, (H_k-H)f_j\big\rangle\negmedspace\big\rangle_1\big|
   \leq \|f_{jk}\|_k\big\|(H_k-H)f_j\big\|_1
   \leq {\rm const}_j \big\|(H_k-H)f_j\big\|_1
   \to 0
   $$
as $k\to \infty$. Equivalently,
   $$
   \|f_j\|^2=\lim\limits_{k\to \infty} \langle\!\langle f_{jk}, f_j\rangle\!\rangle_k.
   $$
On the other hand, since $\bar{\partial} f_{jk}=0$ for all $k\in\mathbb N$, it follows immediately that
$\bar{\partial} f_j=0$ as well, hence
   $$
   \langle\!\langle f_{jk}, f_j\rangle\!\rangle_k
   =\big\langle\negmedspace\big\langle \chi_j\delta_k^{1/2} v_{jk},\,
     f_j\big\rangle\negmedspace\big\rangle_k.
   $$
Moreover
   $$
   \big|\langle\!\langle \chi_j\delta_k^{1/2} v_{jk},\, f_j\rangle\!\rangle_k\big|
   \leq \|v_{jk}\|_k \big\|\chi_j f_j\delta_k^{1/2}\big\|_k
   \leq {\rm const}_j \big\|\chi_j f_j\delta_k^{1/2}\big\|
   \to 0
   $$
as $k\to \infty$, in view of \eqref{ineq:norm-estimate2} and the dominated convergence theorem. Consequently $\|f_j\|=0$, i.e., $f_j=0$ almost everywhere on $X$. This establishes the desired weak convergence of $\{f_{jk}\}_{k\in\mathbb N}$ and hence the theorem.
\end{proof}

\begin{remark}\label{rmk:supplemental remark}
A few remarks are in order:
\begin{enumerate}[leftmargin=2.0pc, parsep=2pt]
  \item [{\rm(i)}] When $q=1$, the estimate in \eqref{ineq:L^2 estimate for solution} is independent of the choice of the K\"{a}hler metric $\omega$. In fact, Demailly \cite{Demailly82} and Ohsawa  \cite{Ohsawa95} have respectively explained in detail why this is the case for the right-hand side of \eqref{ineq:L^2 estimate for solution}; as for the left-hand side, things are relatively simple, since it is easy to see that $|\,\ |^2_{\omega,\,h}\,dV_{\omega}$ can be expressed in an intrinsic way in terms of $\{\,\, ,\,\}_h$ (to be recalled in Section \ref{sect:Ohsawa--Takegoshi extension}):
      \begin{equation}\label{eq:intr-formula}
      |u|^2_{\omega,\,h}\,dV_{\omega}=\sqrt{-1}^{\,n^2}\{u, u\}_h.
      \end{equation}
  Moreover in the case $q=1$, if the form $v$ in Theorem \ref{thm:L^2 estimate of OT-type} is smooth then every solution to equation $\bar{\partial}u=v$ is smooth as well. This follows immediately from the ellipticity of $\bar{\partial}$ as a differential operator acting on $(n,0)$-forms.

  \item [{\rm(ii)}]  In \cite{Demailly-Manivel_extension} (see also \cite{Ohsawa95, Demaillybook2}), Demailly proved using Ohsawa--Takegoshi's a priori inequality \eqref{ineq: Ohsawa-inequality} an abstract $L^2$ existence theorem for the $\bar{\partial}$-operator under the assumption that the metric $h$ on $L$ is {\it smooth} and $v$ satisfies the additional $L^2$ integrability
      $$
      \int_X |v|^2_{\omega,\,h}\,dV_{\omega}<\infty.
      $$
      Compared with this result, the point of Theorem \ref{thm:L^2 estimate of OT-type} is that it allows the metric $h$ to be \textit{singular}. This is crucial in proving  Theorem $\ref{thm:OT-extension-twisted}$ (and hence Theorem \ref{thm:Hartogs_PSH-manifold}).

  \item [{\rm(iii)}] As the preceding proof indicates, the curvature condition \eqref{ineq: curvature condition} can be weakened to an analogous one as in \cite[Th\'{e}or\`{e}me 5.1]{Demailly82} and the bundle $L$ can also be allowed to have higher rank. We restrict ourselves to the present case in order to make the argument as less technical as possible. As we shall see in the next section, Theorem \ref{thm:L^2 estimate of OT-type} is sufficient for the purpose of the present paper.
  \item [{\rm(iv)}] In the special case when $X$ is a bounded complete K\"ahler domain in $\mathbb C^n$ and
      $$
      (L, h)=(X\times \mathbb C,\, e^{-\varphi})
      $$
      with $\varphi\in {\rm Psh}(X)$, Theorem \ref{thm:L^2 estimate of OT-type} was first proved by Chen--Wu--Wang\footnote{They only considered the case when $v$ is smooth and of type $(n,1)$, but their method actually works equally well for not necessarily smooth $(n,q)$-forms $v$, at least under the additional integrability assumption $\int_X |v|^2_{\omega}\, e^{-\varphi}dV_{\omega}<\infty$.}
      in \cite{CWW} using a different method, namely by first solving the twisted Laplace equation with Dirichlet boundary condition on a sequence of smooth subdomains increasing to $X$ and then using these solutions to approximate the desired solution of the twisted  $\bar{\partial}$-equation on $X$ with $L^2$ estimate. Although quite interesting, this approach does not seem to work in the manifold case, since -- as far as the author finds -- it requires that the {\it strict positivity} of the curvature term on the left-hand side of \eqref{ineq: curvature condition} be preserved during the regularization process of the singular Hermitian metric $h$, but in general the loss of positivity (i.e., the error term $\delta_j\omega$ in \eqref{ineq: curvature-ineq}) coming from any regularization sequence $\{h_j\}_{j\in \mathbb N}$ is inevitable and difficult to control even locally.
\end{enumerate}
\end{remark}

\section{Ohsawa--Takegoshi $L^2$ extension theorems: the complete K\"{a}hler case}\label{sect:Ohsawa--Takegoshi extension}

Let $\Omega$ be a domain in a complex manifold and $C^{\infty}(\Omega,\, \Lambda^{p,\, q} T^{\ast}\Omega \otimes L)$ denote the set of all $L$-valued $C^{\infty}$ $(p, q)$-forms on $\Omega$. Given a Hermitian holomorphic line bundle $(L, h)$ over $\Omega$, there is a canonical sesquilinear map
  $$
  \{\,\, ,\,\}_h\!: C^{\infty}(\Omega,\, \Lambda^{p,\, q} T^{\ast}\Omega \otimes L)
  \times C^{\infty}(\Omega,\, \Lambda^{r,\, s} T^{\ast}\Omega \otimes L)
  \to C^{\infty}(\Omega,\, \Lambda^{p+s,\, q+r} T^{\ast}\Omega)
  $$
which combines the wedge product of scalar valued forms with the Hermitian metric $h$ on $L$. More explicitly, for any given $u\in C^{\infty}(\Omega,\, \Lambda^{p,\, q} T^{\ast}\Omega \otimes L)$ and
$v\in C^{\infty}(\Omega,\, \Lambda^{r,\, s} T^{\ast}\Omega \otimes L)$, if we write
  $$
  \left.u\right|_U=f\otimes\zeta \quad {\rm and} \quad \left.v\right|_U=g\otimes\zeta
  $$
where $\zeta$ is a frame of $L$ over an open subset  $U$ of $\Omega$, then
  $$
  \left.\{u, v\}_h\right|_U=f\wedge \bar{g}\, |\zeta|^2_h.
  $$
Clearly the definition of $\{\,\, ,\,\}_h$ is pointwise and hence makes sense for $u\in\Lambda^{p,\, q} T^{\ast}_{x_0}\Omega \otimes L_{x_0}$ and $v\in \Lambda^{r,\, s} T^{\ast}_{x_0}\Omega \otimes L_{x_0}$ with $x_0\in \Omega$, even when $h$ is merely a singular Hermitian metric on $L$.

With the above necessary notion we can now turn to the main goal of this section, namely proving Theorem \ref{thm:OT-extension-twisted}, whose statement we recall for ease of reference.

\begin{theorem}\label{thm:OT-extension-twisted-restated}
Let $X$ be an $n$-dimensional Stein manifold with a continuous volume form $dV_X$ and $\Omega$ be a relatively compact complete K\"ahler domain in $X$. Suppose $L$ is a holomorphic line bundle over $\Omega$ and $h$ is a singular Hermitian metric on $L$ with semi-positive curvature current. Then for every $x_0\in \Omega$ and $s_{x_0}\in \Lambda^n T^{\ast}_{x_0}\Omega \otimes L_{x_0}$, there is an $L$-valued holomorphic $n$-form $s$ on $\Omega$ such that $s(x_0)=s_{x_0}$ and
   $$
   \int_{\Omega} \sqrt{-1}^{\,n^2}\{s, s\}_h \leq {\rm const}_{n,\, \Omega,\, dV_X}\frac{\sqrt{-1}^{\,n^2}\{s_{x_0}, s_{x_0}\}_h}{dV_X(x_0)}
   $$
provided the right-hand side is finite.
\end{theorem}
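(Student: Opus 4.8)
The plan is to deduce the theorem from the $L^2$ existence result Theorem~\ref{thm:L^2 estimate of OT-type}, applied with the complete K\"ahler domain $\Omega$ (equipped with a convenient smooth K\"ahler metric $\omega$, e.g.\ the restriction of $\sqrt{-1}\partial\bar\partial\Phi$ for a smooth strictly plurisubharmonic $\Phi$ on the Stein manifold $X$) and with a suitably modified singular metric on $L$. First I would fix holomorphic coordinates $(z_1,\dots,z_n)$ centered at $x_0$ on a small coordinate ball $B\subset\Omega$, trivialize $\Lambda^nT^{\ast}\Omega$ by $dz:=dz_1\wedge\cdots\wedge dz_n$ and $L$ by a local holomorphic frame $e_L$, and write $s_{x_0}=c\,dz\otimes e_L(x_0)$. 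Freezing the coefficient gives a local holomorphic model $\tilde s:=c\,dz\otimes e_L$ with $\tilde s(x_0)=s_{x_0}$. Choosing a cut-off $\chi$ that is $\equiv 1$ near $x_0$ and supported in $B$, I set $s_0:=\chi\tilde s$, so that $v:=\bar\partial s_0=\bar\partial\chi\wedge\tilde s$ is a smooth $\bar\partial$-closed $(n,1)$-form supported in the annular region $\operatorname{supp}\bar\partial\chi\subset B\setminus\{x_0\}$. If I can solve $\bar\partial u=v$ with a solution $u$ that is \emph{forced to vanish at} $x_0$, then $s:=s_0-u$ is holomorphic, satisfies $s(x_0)=\tilde s(x_0)-u(x_0)=s_{x_0}$, and its norm is controlled by those of $s_0$ and $u$.

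To force $u(x_0)=0$ and to feed the $\bar\partial$-problem into Theorem~\ref{thm:L^2 estimate of OT-type}, I would replace $h$ by the singular metric $\tilde h:=h\,e^{-\psi-\Phi}$. Here $\psi$ is a globally defined plurisubharmonic weight with a logarithmic pole of the right strength at $x_0$: since $X$ is Stein, I can take global holomorphic functions $w_1,\dots,w_n$ vanishing at $x_0$ and forming local coordinates there, and set $\psi=n\log\bigl((|w_1|^2+\cdots+|w_n|^2)/R^2\bigr)$ with $R$ large enough that $\psi\le 0$ on the relatively compact $\Omega$; then $\sqrt{-1}\partial\bar\partial\psi\ge 0$ on $\Omega$ and $\psi\sim n\log|z|^2$ near $x_0$, so that finiteness of $\int_\Omega|u|^2_{\omega,\tilde h}\,dV_\omega$ forces the (smooth, by Remark~\ref{rmk:supplemental remark}(i)) form $u$ to vanish at $x_0$, by the non-integrability of $|z|^{-2n}$ together with the local boundedness above of the weight of $h$. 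The factor $e^{-\Phi}$ restores strict positivity: since $\sqrt{-1}\Theta_h(L)\ge 0$ and $\sqrt{-1}\partial\bar\partial\psi\ge 0$, one has $\sqrt{-1}\Theta_{\tilde h}(L)=\sqrt{-1}\Theta_h(L)+\sqrt{-1}\partial\bar\partial\psi+\sqrt{-1}\partial\bar\partial\Phi\ge\sqrt{-1}\partial\bar\partial\Phi=:\Theta>0$. Taking the twisting functions $\eta\equiv\lambda\equiv 1$ (or, for the sharp constant below, appropriate functions of $\psi$ \`a la Ohsawa-Takegoshi), the curvature condition \eqref{ineq: curvature condition} reduces to $\sqrt{-1}\Theta_{\tilde h}(L)\ge\Theta$ and holds. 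Theorem~\ref{thm:L^2 estimate of OT-type} then yields $u$ with $\bar\partial u=v$ and the estimate \eqref{ineq:L^2 estimate for solution}; converting $|u|^2_{\omega,\tilde h}\,dV_\omega$ to $\sqrt{-1}^{\,n^2}\{u,u\}_h$ via \eqref{eq:intr-formula} and using $e^{-\psi-\Phi}\ge e^{-\sup_\Omega\Phi}$ loses only a bounded factor.

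It then remains to bound the right-hand side of \eqref{ineq:L^2 estimate for solution}, the data norm $\int_\Omega\langle[\Theta,\Lambda_\omega]^{-1}v,v\rangle_{\omega,\tilde h}\,dV_\omega$, by a constant multiple of the point value $\sqrt{-1}^{\,n^2}\{s_{x_0},s_{x_0}\}_h/dV_X(x_0)$, and to transfer this back to $s=s_0-u$ (the Jacobian comparing $\sqrt{-1}^{\,n^2}dz\wedge d\bar z$ with $dV_X$ being uniformly bounded on $\bar\Omega$). I expect this to be the main obstacle. The difficulty is that a crude estimate on the fixed annulus only controls the data by $\int_{\operatorname{supp}\bar\partial\chi}e^{-\varphi}$, where $e^{-\varphi}$ is the weight of $h$, rather than by the point value $e^{-\varphi(x_0)}$; for a genuinely singular $h$ these are not comparable, since the sub-mean value inequality for the plurisubharmonic $\varphi$ bounds $\int e^{-\varphi}$ only from below. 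To obtain the clean, $h$-independent constant I would instead let the plateau radius $\delta$ of $\chi$ shrink to $0$, choose the twisting $\eta=\eta_\delta,\ \lambda=\lambda_\delta$ in the Ohsawa-Takegoshi a priori inequality \eqref{ineq: Ohsawa-inequality} so that the resulting data norm stays uniformly bounded and converges, as $\delta\to 0$, to a fixed multiple of the point value, and then extract a weak limit $s=\lim_{\delta\to 0}(s_0^{(\delta)}-u^{(\delta)})$; this $s$ is holomorphic, takes the value $s_{x_0}$ at $x_0$, and satisfies the asserted estimate. Verifying \eqref{ineq: curvature condition} for this $\delta$-dependent twisting, and checking that \emph{all} constants remain uniform in $x_0\in\Omega$ (using relative compactness and uniformly sized charts $B$), is the technical heart of the argument, while the reduction and the vanishing mechanism are essentially formal.
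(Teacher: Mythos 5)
Your reduction scheme (cut-off local datum, pole weight forcing $u(x_0)=0$, Theorem~\ref{thm:L^2 estimate of OT-type}, conversion via \eqref{eq:intr-formula}) is the same skeleton as the paper's proof, and you have correctly located the crux: for a genuinely singular $h$ the data norm produced by the \emph{frozen} section $c\,dz\otimes e_L$ is controlled only by annulus integrals of $e^{-\varphi}$, not by the point value $e^{-\varphi(x_0)}$. But your proposed repair --- shrinking the plateau radius $\delta$ and tuning $\eta_\delta,\lambda_\delta$ so that the data norm ``converges to a fixed multiple of the point value'' --- does not close this gap, and cannot. The twisting functions enter the data norm only as scalar multipliers, and they are pinned down on the other side of a seesaw: to pass from the weighted estimate \eqref{ineq:L^2 estimate for solution} to the unweighted bound on $s$, one needs $(\eta_\delta+\lambda_\delta)^{-1}$ times the pole weight to stay bounded below (this is the step $(\eta_{\varepsilon}+\lambda_{\varepsilon})^{-1}e^{-(\rho+n\log\rho)}\geq {\rm const}_n$ in the paper). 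Once that constraint is imposed, the data norm with the frozen datum is still bounded below by ${\rm const}\cdot|c|^2\,\delta^{-2n}\int_{\{\delta/2<|z|<\delta\}}e^{-\varphi}$, and for singular psh $\varphi$ there is no a priori comparison of this quantity with $|c|^2e^{-\varphi(x_0)}$ --- not even along a subsequence of radii. You give no argument for such a comparison, and proving one would be essentially as hard as the theorem itself; so the limiting scheme in your last paragraph restates the difficulty rather than resolving it.

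What is missing is the paper's Lemma~\ref{lem:scaling OT}: one must change the local \emph{holomorphic datum}, not only the weights. Applying the classical Theorem~\ref{thm:original OT-extension} with $H=\{x_0\}$ on the shrinking coordinate ball $\varepsilon B_{x_0}$ yields a holomorphic $n$-form $f_\varepsilon$ (in general different from the frozen one) with $f_\varepsilon(x_0)=f_{x_0}$ and $\varepsilon^{-2n}\int_{\varepsilon B_{x_0}}\sqrt{-1}^{\,n^2}f_\varepsilon\wedge\bar f_\varepsilon\,e^{-\varphi}\leq {\rm const}_{n,\,\Omega,\,dV_X}\,\sqrt{-1}^{\,n^2}f_{x_0}\wedge\bar f_{x_0}\,e^{-\varphi(x_0)}/dV_X(x_0)$ --- exactly the point-value control that the constant section lacks, with the $\varepsilon^{-2n}$ scaling matched to the pole weight $|z|^{-2n}$ on the annulus. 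Taking the datum $\chi(|z|^2/\varepsilon^2)f_\varepsilon\otimes\zeta$ and $\Theta_\varepsilon=\sqrt{-1}\partial\bar\partial(\rho+\psi_\varepsilon)$ (so that $[\Theta_\varepsilon,\Lambda_\omega]^{-1}$ contributes a factor $(|z|^2+\varepsilon^2)^2/\varepsilon^2$ cancelling $|\bar\partial\chi(|z|^2/\varepsilon^2)|^2$), the data norm is then bounded by $\varepsilon^{-2n}\int_{\varepsilon B}\sqrt{-1}^{\,n^2}f_\varepsilon\wedge\bar f_\varepsilon\,e^{-\varphi}$, hence by the point value, uniformly in $x_0$ thanks to the uniform charts \eqref{ineq:vol-comp}. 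Once the datum is corrected in this way, a single sufficiently small $\varepsilon$ suffices and no weak limit in $\delta$ is needed; the remainder of your outline (the pole forcing $u_\varepsilon(x_0)=0$, the curvature verification, the final conversion of norms) then goes through exactly as in the paper.
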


Here and henceforward we use ${\rm const}_{n,\, \Omega,\, dV_X}$ to denote a generic constant which depends only on $n$, $\Omega$ and $dV_X$, and may be different in different appearances. We shall also use similar notation with obvious meaning in the context.

\smallskip
We now prove the above theorem.  Since $\overline{\Omega}$ is compact in $X$, we can associate to every point $x\in\Omega$ a local holomorphic coordinate chart $z_x=(z_x^1,\ldots,z_x^n)$ of $X$ defined on a neighborhood of the closure of $B_x\coloneqq\{|z_x|<1\}$ such that $z_x(x)=0$ and
\begin{equation}\label{ineq:vol-comp}
  \frac{\sqrt{-1}^{\,n^2} dz_x\wedge d\bar z_x}{dV_X}\geq {\rm const}_{n,\, \Omega,\, dV_X}\quad {\rm on}\, \, B_x,
\end{equation}
where $dz_x\coloneqq dz_x^1\wedge\cdots\wedge dz_x^n$. Set
  $$
  \varepsilon B_x=\{|z_x|<\varepsilon \}
  $$
for $x\in \Omega$ and $\varepsilon\in (0,1)$. We claim the following

\begin{lemma}\label{lem:scaling OT}
Given $x_0\in \Omega$, $f_{x_0}\in \Lambda^n T^{\ast}_{x_0}\Omega$ and $\varepsilon\in (0,1)$, $\varphi\in {\rm Psh}(\varepsilon B_{x_0})$, there is a holomorphic $n$-form $f_{\varepsilon}$ on $\varepsilon B_{x_0}$ such that $f_{\varepsilon}(x_0)=f_{x_0}$ and
   $$
   \varepsilon^{-2n}\int_{\varepsilon B_{x_0}}\sqrt{-1}^{\,n^2} f_{\varepsilon}\wedge\bar f_{\varepsilon}\, e^{-\varphi}
   \leq {\rm const}_{n,\, \Omega,\, dV_X} \frac{\sqrt{-1}^{\,n^2}f_{x_0}\wedge \bar{f}_{x_0}}{dV_X(x_0)}e^{-\varphi(x_0)}.
   $$
\end{lemma}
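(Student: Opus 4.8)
The plan is to reduce the assertion to the classical Ohsawa--Takegoshi theorem (Theorem \ref{thm:original OT-extension}) applied to the fixed unit ball with the point $H=\{0\}$, by means of a dilation that rescales $\varepsilon B_{x_0}$ onto the unit ball $B=\{|w|<1\}$. First I would use the coordinate $z=z_{x_0}$ (with $z(x_0)=0$) to identify $\varepsilon B_{x_0}$ with the Euclidean ball $\{|z|<\varepsilon\}\subset\mathbb C^n$, and write $f_{x_0}=c\,(dz)|_{x_0}$ for the unique $c\in\mathbb C$, where $dz=dz^1\wedge\cdots\wedge dz^n$. If $f_{x_0}=0$ one simply takes $f_\varepsilon=0$; and if $\varphi(x_0)=-\infty$ with $c\neq0$ the right-hand side is $+\infty$, so any holomorphic extension---for instance the constant-coefficient form $c\,dz$---already satisfies the inequality. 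Thus I may assume from now on that $\varphi(x_0)$ is finite and $c\neq0$.

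Next I would introduce the dilation $z=\varepsilon w$ and the rescaled weight $\psi(w):=\varphi(\varepsilon w)$, which is plurisubharmonic on $B$ because $w\mapsto\varepsilon w$ is holomorphic. Since $B$ is bounded and pseudoconvex and $H=\{0\}$ is a $0$-dimensional (hence codimension $n$) complex subspace meeting $B$, Theorem \ref{thm:original OT-extension} produces a holomorphic function $F$ on $B$ with $F(0)=c$ and $\int_B|F|^2e^{-\psi}\,dV\leq C(n)\,|c|^2e^{-\psi(0)}$, the constant depending only on $\mathrm{diam}(B)=2$ and the codimension $n$, hence only on $n$. I would then set $f_\varepsilon:=F(z/\varepsilon)\,dz$, a holomorphic $n$-form on $\varepsilon B_{x_0}$ with $f_\varepsilon(x_0)=F(0)\,(dz)|_{x_0}=f_{x_0}$.

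It remains to track the constants. Using the identity $\sqrt{-1}^{\,n^2}dz\wedge d\bar z=2^n\,dV$ and the change of variables $z=\varepsilon w$ (whose Jacobian contributes $\varepsilon^{2n}$), the left-hand integral equals $2^n\varepsilon^{2n}\int_B|F|^2e^{-\psi}\,dV$, which by the estimate above is at most $2^nC(n)\,\varepsilon^{2n}|c|^2e^{-\varphi(x_0)}$, recalling $\psi(0)=\varphi(x_0)$. Finally I would convert $|c|^2$ back into the intrinsic quantity on the right-hand side: since $\sqrt{-1}^{\,n^2}f_{x_0}\wedge\bar f_{x_0}=|c|^2\,\sqrt{-1}^{\,n^2}(dz\wedge d\bar z)|_{x_0}$, the volume comparison \eqref{ineq:vol-comp} evaluated at $x_0$ gives $|c|^2\leq{\rm const}_{n,\,\Omega,\,dV_X}\,\dfrac{\sqrt{-1}^{\,n^2}f_{x_0}\wedge\bar f_{x_0}}{dV_X(x_0)}$. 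Multiplying through by $\varepsilon^{-2n}$ and absorbing the factor $2^nC(n)$ into the generic constant then yields exactly the claimed inequality.

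I do not expect a serious obstacle here: once the scaling is set up the argument is essentially routine bookkeeping. The only two points requiring a little care are the degenerate case $\varphi(x_0)=-\infty$, which must be handled separately (as above) because Theorem \ref{thm:original OT-extension} presupposes the a priori finiteness $|c|^2e^{-\psi(0)}<\infty$, and confining the dependence of the final constant to $n$, $\Omega$ and $dV_X$. The latter is guaranteed because the Ohsawa--Takegoshi constant $C(n)$ depends only on $n$ (the ball $B$ being fixed after rescaling), and the constant appearing in \eqref{ineq:vol-comp} is, by the very construction of the coordinate charts $z_x$, of the admissible type ${\rm const}_{n,\,\Omega,\,dV_X}$.
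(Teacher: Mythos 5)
Your proof is correct and follows essentially the same route as the paper: the paper likewise writes $f_{x_0}=c\,dz_{x_0}$, applies the classical Ohsawa--Takegoshi theorem via the rescaling argument of Chen--Wu--Wang (which you spell out explicitly with the dilation $z=\varepsilon w$ onto the unit ball and $H=\{0\}$) to obtain $\varepsilon^{-2n}\int_{\varepsilon B_{x_0}}\sqrt{-1}^{\,n^2}f_\varepsilon\wedge\bar f_\varepsilon\,e^{-\varphi}\leq{\rm const}_n\,|c|^2e^{-\varphi(x_0)}$, and then concludes with the volume comparison \eqref{ineq:vol-comp}. Your separate handling of the degenerate cases and the explicit change-of-variables bookkeeping are simply a more detailed writing-out of the same argument.
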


\begin{proof}
Write $f_{x_0}=c\, dz_{x_0}$ with $c\in\mathbb C$. As in \cite{CWW}, an application of the Ohsawa--Takegoshi extension theorem shows that for every $\varepsilon\in (0,1)$ there is a holomorphic $n$-form $f_{\varepsilon}$ on $\varepsilon B_{x_0}$ such that $f_{\varepsilon}(x_0)=f_{x_0}$ and
   $$
   \varepsilon^{-2n}\int_{\varepsilon B_{x_0}}\sqrt{-1}^{\,n^2} f_{\varepsilon}\wedge\bar f_{\varepsilon}\, e^{-\varphi}
   \leq {\rm const}_n |c|^2 e^{-\varphi(x_0)}.
   $$
Combining this with inequality \eqref{ineq:vol-comp} yields the desired result immediately.
\end{proof}

We now proceed to prove Theorem \ref{thm:OT-extension-twisted-restated}. With Theorem \ref{thm:L^2 estimate of OT-type} in hand, we do this by adapting the argument in the proof of \cite[Theorem 1.3]{CWW}.

Without loss of generality we may assume that $dV_X=\omega^n/n!$, $\omega$ being a K\"ahler form on $X$. Fix an arbitrary point $x_0\in \Omega$. Let us write $(B, z)=(B_{x_0}, z_{x_0})$ for the sake of simplicity, and pick a function $\rho\in C^{\infty}(X,\,\mathbb R)$ which is positive, strictly log-psh on $X\!\setminus\!\{x_0\}$ and satisfies $\rho<e^{-1}$ on $\overline{\Omega}$ and $\rho=|z|^2$ on a small neighborhood of $x_0$. Similarly as in \cite{Ohsawa95, Chen_simple, CWW} we set
  $$
  \psi_{\varepsilon}=\log (\rho+\varepsilon^2)\quad {\rm and} \quad \eta_{\varepsilon}=-\psi_{\varepsilon}+\log(-\psi_{\varepsilon})
  $$
for $0<\varepsilon\leq \varepsilon_0$, where $\varepsilon_0>0$ is chosen so small  that $\psi_{\varepsilon_0}<-1$ on  $\overline{\Omega}$ and the given Hermitian holomorphic line bundle $(L, h)$ is trivial over $\varepsilon_0B$. A straightforward calculation shows
  $$
  -\partial\eta_{\varepsilon}=(1-\psi_{\varepsilon}^{-1})\partial \psi_{\varepsilon}
  \quad {\rm and} \quad
  -\partial\bar{\partial}\eta_{\varepsilon}=(1-\psi_{\varepsilon}^{-1})\partial\bar{\partial}\psi_{\varepsilon}
  +\psi_{\varepsilon}^{-2}\partial\psi_{\varepsilon} \wedge \bar{\partial}\psi_{\varepsilon},
  $$
whence
  $$
  -\sqrt{-1}\partial\bar{\partial}\eta_{\varepsilon}-\lambda_{\varepsilon}^{-1}\sqrt{-1}\partial\eta_{\varepsilon}\wedge \bar{\partial}\eta_{\varepsilon}\ge \sqrt{-1}\partial\bar{\partial}\psi_{\varepsilon} \quad {\rm on}\ \, \Omega,
  $$
where $\lambda_{\varepsilon}=(1-\psi_{\varepsilon})^2$.

Now let $s_{x_0}\in \Lambda^n T^{\ast}_{x_0}\Omega\otimes L_{x_0}$ be given such that
   $$
   \frac{\sqrt{-1}^{\,n^2}\{s_{x_0}, s_{x_0}\}_h}{\omega^n(x_0)}<\infty.
   $$
Take a holomorphic frame $\zeta$ of $L$ over $\varepsilon_0B$ and write $s_{x_0}$ as $s_{x_0}=f_{x_0}\otimes \zeta_{x_0}$. Let $\{f_{\varepsilon}\}_{0<\varepsilon\leq \varepsilon_0}$ be a family of holomorphic $n$-forms as in Lemma \ref{lem:scaling OT}, corresponding to the weight function
  $$
  \varphi=-\log|\zeta|^2_h,
  $$
which is psh on $\varepsilon_0B$ because of the semi-positivity of $\sqrt {-1}\Theta_h(L)$. Consider
  $$
  \Theta_{\varepsilon}=\sqrt{-1}\partial\bar{\partial}(\rho+\psi_{\varepsilon})
  \quad {\rm and} \quad
  v_{\varepsilon}=\bar{\partial}\big(\chi(|z|^2/\varepsilon^2)f_{\varepsilon}\big)\otimes \zeta,
  $$
where $\chi$ is a $C^{\infty}$ function on $\mathbb R$ satisfying $\chi|_{(-\infty,\,1/4)}=1$, $\chi|_{(3/4,\,\infty)}=0$ and $|\chi'|\leq 4$. Observe that
  $$
  \Theta_{\varepsilon}\geq \sqrt{-1}\partial\bar{\partial}\psi_{\varepsilon}
  =\sqrt{-1}\partial\bar{\partial}\log (|z|^2+\varepsilon^2)
  \geq \frac{\varepsilon^2}{(|z|^2+\varepsilon^2)^2}\sqrt{-1}\partial\bar{\partial}|z|^2
  $$
on a neighborhood of $x_0$. Using this and \cite[Lemme 3.2]{Demailly82} (see also the paragraph following Proposition 1.5 in \cite{Ohsawa95}), we can estimate
\begin{align*}
   \big\langle\negmedspace\big\langle
        [\Theta&_{\varepsilon}, \Lambda_{\omega}]^{-1} v_{\varepsilon},\, v_{\varepsilon} \big\rangle\negmedspace\big\rangle_{\omega,\, he^{-(\rho+n\log\rho)}}\\
   &=\int_{{\rm supp}\,v_{\varepsilon}} \big\langle
        [\Theta_{\varepsilon}, \Lambda_{\sqrt{-1} \partial\bar{\partial}|z|^2}]^{-1}v_{\varepsilon},\,
        v_{\varepsilon} \big\rangle_{\sqrt{-1}\partial\bar{\partial}|z|^2,\,h}
        \, e^{-(\rho+n\log\rho)} \frac{(\sqrt{-1}\partial\bar{\partial}|z|^2)^n}{n!}\\
   &\leq \int_{{\rm supp}\,v_{\varepsilon}} \frac{(|z|^2+\varepsilon^2)^2}{\varepsilon^2}
        |v_{\varepsilon}|^2_{\sqrt{-1}\partial\bar{\partial}|z|^2,\,h}
        \, e^{-(\rho+n\log\rho)} \frac{(\sqrt{-1}\partial\bar{\partial}|z|^2)^n}{n!}\\
   &\leq \|\chi'\|^2_{L^{\infty}(\mathbb R)} \int_{\frac{\sqrt3\varepsilon}{2}B \setminus
        \frac{\varepsilon}{2}B} \frac{|z|^2}{\varepsilon^4}
        \frac{(|z|^2+\varepsilon^2)^2}{\varepsilon^2} \sqrt{-1}^{\,n^2} f_{\varepsilon}\wedge\bar f_{\varepsilon}\, |z|^{-2n} e^{-\varphi-|z|^2}\\
   &\leq {\rm const}_n \, \varepsilon^{-2n} \int_{\varepsilon B}
        \sqrt{-1}^{\,n^2} f_{\varepsilon}\wedge\bar f_{\varepsilon}\, e^{-\varphi}\\
   &\leq {\rm const}_{n,\, \Omega,\, \omega}
        \frac{\sqrt{-1}^{\,n^2} f_{x_0}\wedge\bar{f}_{x_0}}{\omega^n(x_0)} e^{-\varphi(x_0)}\\
   &=   {\rm const}_{n,\, \Omega,\, \omega} \frac{\sqrt{-1}^{\,n^2}\{s_{x_0}, s_{x_0}\}_h}{\omega^n(x_0)}
\end{align*}
for all sufficiently small $\varepsilon>0$. In this way,
  $
  (\Omega, \omega, L, he^{-(\rho+n\log\rho)}, \eta_{\varepsilon}, \lambda_{\varepsilon}, \Theta_{\varepsilon}, v_{\varepsilon})
  $
fits into the framework of Theorem \ref{thm:L^2 estimate of OT-type}.
Therefore,  there is a solution $u_{\varepsilon}$ to equation $\bar{\partial}u=v_{\varepsilon}$ with the estimate
\begin{equation}\label{ineq: solution-estimate1}
  \int_{\Omega} \frac{\sqrt{-1}^{\,n^2}\{u_{\varepsilon}, u_{\varepsilon}\}_h}{\eta_{\varepsilon}+\lambda_{\varepsilon}}
      e^{-(\rho+n\log\rho)}
  \leq {\rm const}_{n,\, \Omega,\, \omega} \frac{\sqrt{-1}^{\,n^2}\{s_{x_0}, s_{x_0}\}_h}{\omega^n(x_0)}.
\end{equation}
Here we have used the identity in \eqref{eq:intr-formula}.
Define
  $$
  s_{\varepsilon}\coloneqq\chi(|z|^2/\varepsilon^2)f_{\varepsilon}\otimes \zeta-u_{\varepsilon}.
  $$
Then $s_{\varepsilon}$ is an $L$-valued holomorphic $n$-form on $\Omega$. Moreover, since  $u_{\varepsilon}$ is holomorphic on a small neighborhood of $x_0$ and $e^{-n\log\rho}$ is not integrable near $x_0$ (since it is equal to $|z|^{-2n}$ there), it follows from  the finiteness of the left-hand side of \eqref{ineq: solution-estimate1} that $u_{\varepsilon}(x_0)=0$ and hence
  $$
  s_{\varepsilon}(x_0)=f_{x_0}\otimes \zeta_{x_0}=s_{x_0}.
  $$
Also recalling that $\rho<e^{-1}$ on $\overline{\Omega}$ and taking into account the trivial fact that
  $$
  \eta_{\varepsilon}+\lambda_{\varepsilon}\leq 6\, \psi_{\varepsilon}^2\leq 6 \log^2\!\rho,
  $$
we conclude that
  $$
  (\eta_{\varepsilon}+\lambda_{\varepsilon})^{-1}e^{-(\rho+n\log\rho)}
  \geq \frac{{\rm const}}{\rho^n\log^2\!\rho}\geq {\rm const}_n.
  $$
Hence \eqref{ineq: solution-estimate1} implies
  $$
  \int_{\Omega} \sqrt{-1}^{\,n^2}\{u_{\varepsilon}, u_{\varepsilon}\}_h
  \leq {\rm const}_{n,\, \Omega,\, \omega} \frac{\sqrt{-1}^{\,n^2}\{s_{x_0}, s_{x_0}\}_h}{\omega^n(x_0)}.
  $$
Now combining this with the choice of $f_{\varepsilon}$ and applying the Cauchy--Schwarz inequality, we end up with
  $$
  \int_{\Omega} \sqrt{-1}^{\,n^2}\{s_{\varepsilon}, s_{\varepsilon}\}_h\leq {\rm const}_{n,\, \Omega,\, \omega} \frac{\sqrt{-1}^{\,n^2}\{s_{x_0}, s_{x_0}\}_h}{\omega^n(x_0)}.
  $$
Consequently $s=s_{\varepsilon}$ is a holomorphic section of $\Lambda^n T^{\ast}\Omega \otimes L$
as desired in the theorem, provided $\varepsilon>0$ is sufficiently small. This completes the proof of Theorem \ref{thm:OT-extension-twisted-restated}.

\medskip

Applying Theorem \ref{thm:OT-extension-twisted-restated} to the trivial line bundle $L=\Omega\times
\mathbb C$ equipped with the metric $e^{-\varphi}$, $\varphi\in {\rm Psh}(\Omega)$, we get immediately the following

\begin{theorem}\label{thm:OT-extension}
Let $X$ be an $n$-dimensional Stein manifold with a continuous volume form $dV_X$ and $\Omega$ be a relatively compact complete K\"ahler domain in $X$.
Then for every $x_0\in \Omega$, $f_{x_0}\in \Lambda^n T^{\ast}_{x_0}\Omega$ and $\varphi\in {\rm Psh}(\Omega)$,  there exists a holomorphic $n$-form $f$ on $\Omega$ such that $f(x_0)=f_{x_0}$ and
\begin{equation}\label{ineq: extension-ineq}
\int_{\Omega}\sqrt{-1}^{\,n^2} f\wedge\bar f\, e^{-\varphi} \leq {\rm const}_{n,\, \Omega,\, dV_X}\frac{\sqrt{-1}^{\,n^2}f_{x_0}\wedge \bar{f}_{x_0}}{dV_X(x_0)}e^{-\varphi(x_0)}.
\end{equation}
\end{theorem}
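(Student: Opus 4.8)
The plan is to deduce Theorem \ref{thm:OT-extension} as a direct specialization of the already-established twisted extension theorem, Theorem \ref{thm:OT-extension-twisted-restated}, applied to the trivial line bundle $L=\Omega\times\mathbb C$ endowed with the singular Hermitian metric $h=e^{-\varphi}$. First I would verify that this $h$ is an admissible input, and then translate the conclusion of Theorem \ref{thm:OT-extension-twisted-restated}, phrased in terms of the intrinsic pairing $\{\,\,,\,\}_h$ introduced at the beginning of Section \ref{sect:Ohsawa-Takegoshi extension}, back into the scalar integrals appearing in \eqref{ineq: extension-ineq}.

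Since $\varphi\in{\rm Psh}(\Omega)$ is upper semi-continuous and (assuming $\varphi\not\equiv-\infty$) locally integrable, $h=e^{-\varphi}$ is a genuine singular Hermitian metric on $L$ in the sense of the definition in Section \ref{sect:Ohsawa-Takegoshi existence}; moreover its curvature current is $\sqrt{-1}\Theta_h(L)=\sqrt{-1}\partial\bar{\partial}\varphi\geq 0$, so the semi-positivity hypothesis of Theorem \ref{thm:OT-extension-twisted-restated} is met. Taking the constant section $\zeta\equiv 1$ as a global holomorphic frame of $L$, an $L$-valued holomorphic $n$-form is precisely a scalar holomorphic $n$-form $f$ via $s=f\otimes\zeta$, and $|\zeta|^2_h=e^{-\varphi}$. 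The definition of $\{\,\,,\,\}_h$ then gives $\{s,s\}_h=f\wedge\bar f\,e^{-\varphi}$ pointwise, and at $x_0$ in particular $\{s_{x_0},s_{x_0}\}_h=f_{x_0}\wedge\bar f_{x_0}\,e^{-\varphi(x_0)}$.

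With these identifications the input datum $s_{x_0}=f_{x_0}\otimes\zeta_{x_0}$ of Theorem \ref{thm:OT-extension-twisted-restated} produces an $L$-valued holomorphic $n$-form $s=f\otimes\zeta$ with $s(x_0)=s_{x_0}$, that is, a scalar holomorphic $n$-form $f$ with $f(x_0)=f_{x_0}$, satisfying
\begin{align*}
\int_{\Omega}\sqrt{-1}^{\,n^2} f\wedge\bar f\,e^{-\varphi}
&=\int_{\Omega}\sqrt{-1}^{\,n^2}\{s,s\}_h
\leq {\rm const}_{n,\,\Omega,\,dV_X}\frac{\sqrt{-1}^{\,n^2}\{s_{x_0},s_{x_0}\}_h}{dV_X(x_0)}\\
&={\rm const}_{n,\,\Omega,\,dV_X}\frac{\sqrt{-1}^{\,n^2}f_{x_0}\wedge\bar f_{x_0}}{dV_X(x_0)}\,e^{-\varphi(x_0)},
\end{align*}
which is exactly \eqref{ineq: extension-ineq}. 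When the right-hand side is infinite (for instance if $\varphi(x_0)=-\infty$) the asserted inequality holds trivially, so the finiteness proviso of Theorem \ref{thm:OT-extension-twisted-restated} may be omitted in the present statement.

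Since the entire analytic content of the statement already resides in Theorem \ref{thm:OT-extension-twisted-restated}, no genuine difficulty arises here: the only point demanding any care is the bookkeeping translation between the intrinsic sesquilinear product $\{\,\,,\,\}_h$ and the scalar integrand $f\wedge\bar f\,e^{-\varphi}$, together with the verification that $e^{-\varphi}$ qualifies as a singular Hermitian metric with semi-positive curvature current. Both are routine, so the corollary follows immediately.
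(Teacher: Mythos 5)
Your proposal is correct and is exactly the paper's own route: the paper obtains this theorem as an immediate corollary of Theorem \ref{thm:OT-extension-twisted-restated} applied to the trivial bundle $L=\Omega\times\mathbb C$ with metric $e^{-\varphi}$, with the same bookkeeping between $\{\,\,,\,\}_h$ and $f\wedge\bar f\,e^{-\varphi}$. The only detail worth making explicit (the paper does so in a remark) is that when the right-hand side of \eqref{ineq: extension-ineq} is infinite one still needs \emph{some} holomorphic $n$-form on $\Omega$ with the prescribed value at $x_0$, which exists because $X$ is Stein.
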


\begin{remark}
Note that the conclusion of Theorem \ref{thm:OT-extension} holds trivially when the right-hand side of \eqref{ineq: extension-ineq} is infinite: in this case every holomorphic $n$-form $f$ on $\Omega$ that takes value $f_{x_0}$ at $x_0$ automatically satisfies the desired estimate \eqref{ineq: extension-ineq}, and such an $f$ always exists (even on $X$) since $X$ is Stein. Incidentally, as Theorem \ref{thm:OT-extension} (and also Theorem \ref{thm:OT-extension-twisted-restated}) itself implies, in order for its conclusion to hold it is sufficient to assume that $dV_X$ is defined on a neighborhood of $\overline{\Omega}$.

On the other hand, as indicated by a result of Chen--Wu--Wang \cite{CWW}, the conclusion of Theorem \ref{thm:OT-extension} (and hence Theorem \ref{thm:OT-extension-twisted-restated}) does not hold in general when the complete K\"{a}hlerness assumption of $\Omega$ is dropped.

\end{remark}

\section{Proof of Theorem $\ref{thm:Hartogs_PSH-manifold}$}\label{sect: psh extension}
The key ingredients of the proof are Theorem \ref{thm:OT-extension} and some basic geometric measure theory. As already mentioned in the Introduction, the idea of using Theorem \ref{thm:OT-extension} in the proof is inspired by the work of Chen--Wu--Wang \cite{CWW} (and Demailly \cite{Demailly92}). To get the idea off the ground, we first need to prove the following result:

\begin{theorem}\label{thm:complete-Kahler}
Let $X$ be a Stein manifold and $E\subset X$ be a closed complete pluripolar set. Then $X\!\setminus\!E$ carries a complete K\"{a}hler metric.
\end{theorem}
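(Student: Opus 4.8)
The plan is to construct an explicit complete Kähler metric on $X\!\setminus\!E$ by combining the Col\c{t}oiu potential of Theorem~\ref{thm:def-complete-polar} with a fixed complete Kähler metric on the ambient Stein manifold. First I would fix such a background metric: via a proper (hence closed) holomorphic embedding $X\hookrightarrow\mathbb C^N$, let $\omega_X$ be the restriction of the Euclidean Kähler form; as the induced metric on a closed submanifold of a complete manifold, $\omega_X$ is a complete Kähler metric on $X$. Next, let $\rho\in{\rm Psh}(X)\cap C^{\infty}(X\!\setminus\!E)$ be as in Theorem~\ref{thm:def-complete-polar}, so that $\rho^{-1}(-\infty)=E$ and $\sqrt{-1}\partial\bar\partial\rho>0$ on $X\!\setminus\!E$. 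On the set $\{\rho<0\}$ the function $u:=-\log(-\rho)$ is smooth, tends to $-\infty$ along $E$, and a direct computation gives
\[
\sqrt{-1}\partial\bar\partial u=\frac{\sqrt{-1}\partial\bar\partial\rho}{-\rho}+\sqrt{-1}\partial u\wedge\bar\partial u\geq \sqrt{-1}\partial u\wedge\bar\partial u,
\]
the Poincar\'e-type behaviour that should force completeness near $E$.

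The main technical point I anticipate is that $u=-\log(-\rho)$ cannot serve as a \emph{global} potential: on a noncompact Stein manifold a strictly psh function is never bounded above, so $\rho$ takes positive values and $u$ is defined only near $E$. I would resolve this by a convex-extension device. Choose $a<0$ with $\{\rho<a\}$ a neighborhood of $E$, and pick a smooth convex increasing $G\colon\mathbb R\to\mathbb R$ with $G(t)=-\log(-t)$ for $t\le a$; such a prolongation exists because $-\log(-t)$ is itself convex increasing with finite, increasing slope at $t=a$, so it extends convexly (e.g. essentially linearly) to all of $\mathbb R$. Then $G\circ\rho$ is psh on $X$, smooth and finite on $X\!\setminus\!E$, equals $u$ on $\{\rho<a\}$, and
\[
\sqrt{-1}\partial\bar\partial(G\circ\rho)=G'(\rho)\,\sqrt{-1}\partial\bar\partial\rho+G''(\rho)\,\sqrt{-1}\partial\rho\wedge\bar\partial\rho\geq 0
\]
on $X\!\setminus\!E$, since both coefficients are nonnegative. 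Hence $\omega:=\omega_X+\sqrt{-1}\partial\bar\partial(G\circ\rho)$ is a genuine smooth Kähler form on $X\!\setminus\!E$ with $\omega\ge\omega_X$, and the patching problem disappears without ever needing to cut off on a (possibly noncompact) transition band.

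Finally I would verify completeness of $(X\!\setminus\!E,\omega)$ by a Cauchy-sequence argument. Since $\omega\ge\omega_X$, every $d_\omega$-Cauchy sequence $\{p_k\}$ is $d_{\omega_X}$-Cauchy, hence converges in the complete manifold $(X,\omega_X)$ to some $p\in X$; it remains only to exclude $p\in E$. On $\{\rho<a\}$ one has $\omega\ge\sqrt{-1}\partial\bar\partial u\ge\sqrt{-1}\partial u\wedge\bar\partial u$, and the elementary fact that $\omega\ge\sqrt{-1}\alpha\wedge\bar\alpha$ for a $(1,0)$-form $\alpha$ forces $|\alpha|_\omega\le1$ yields $|du|_\omega\le\sqrt2$ there; thus $u$ is $d_\omega$-Lipschitz near $E$, so $\{u(p_k)\}$ would be bounded, contradicting $u(p_k)=-\log(-\rho(p_k))\to-\infty$ as $p_k\to p\in E$ (upper semicontinuity of $\rho$). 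Therefore $p\in X\!\setminus\!E$ and $\omega$ is complete. The one place I expect to demand genuine care is precisely this simultaneous control of completeness at $E$ and at the ends of $X$; the embedding metric $\omega_X$ handles the ends and the domination $\omega\ge\sqrt{-1}\partial u\wedge\bar\partial u$ handles $E$, so I expect the correct \emph{packaging} of these two ingredients, rather than any single estimate, to be the crux.
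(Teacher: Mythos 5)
Your construction is correct and runs, at its core, on the same fuel as the paper's proof: Col\c{t}oiu's potential $\rho$ (Theorem~\ref{thm:def-complete-polar}), the estimate $\sqrt{-1}\partial\bar\partial u\ge\sqrt{-1}\partial u\wedge\bar\partial u$ for $u=-\log(-\rho)$ near $E$, domination of a complete background metric to control the ends of $X$, and a length estimate to exclude limit points on $E$. The packaging, however, is genuinely different. The paper extends $-\log(-\rho)$ beyond $U=\{\rho<-1\}$ by an \emph{arbitrary} smooth function $\psi$ on $X\!\setminus\!E$ and must then add $\sqrt{-1}\partial\bar\partial(\chi\circ\varphi)$, with $\varphi$ a strictly psh exhaustion and $\chi$ convex rapidly increasing, both to absorb the uncontrolled negativity of $\sqrt{-1}\partial\bar\partial\psi$ outside $U$ and to dominate a complete metric $\omega_0$ on $X$. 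Your convex prolongation $G$ makes the singular potential globally psh in one stroke, so no cutoff or compensating exhaustion term is needed, and completeness at infinity is imported from the Remmert embedding instead; this is a cleaner globalization, at the cost of invoking the embedding theorem. Two small caveats: the prolongation must be genuinely $C^\infty$ at the junction $t=a$ (a literal linear continuation is only $C^1$, and you need $G\circ\rho$ smooth across $\{\rho=a\}$ for $\omega$ to be a smooth form), which a standard smoothing of the corner provides; and your motivating claim that a strictly psh function on a noncompact Stein manifold is never bounded above is false (consider $|z|^2$ on the unit disc) --- harmlessly so, since the real point is only that Theorem~\ref{thm:def-complete-polar} gives no upper bound on $\rho$.

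The one step that is not justified as written is ``$|du|_\omega\le\sqrt2$ on $\{\rho<a\}$, thus $u$ is $d_\omega$-Lipschitz near $E$''. A gradient bound on a subregion does not by itself give $|u(x)-u(y)|\le\sqrt2\,d_\omega(x,y)$ for $x,y\in\{\rho<a\}$, because a near-minimizing path joining them may leave $\{\rho<a\}$, where you have no such bound (the sufficient inequality $G''\ge(G')^2$ holds with equality for $-\log(-t)$ on $t\le a$ but need not hold for the prolongation). The claim is nevertheless true, and the repair is precisely the device in the paper's proof (the first-exit times $t_j$): if a path from $x$ to $y$ stays in $\{\rho<a\}$, integrate the gradient bound along it; if it exits, it must cross $\{\rho=a\}$, where $u=-\log(-a)$, both on the way out and on the way back, so its length is at least $\frac{1}{\sqrt2}\big[\big(-\log(-a)-u(x)\big)+\big(-\log(-a)-u(y)\big)\big]\ge\frac{1}{\sqrt2}\,|u(x)-u(y)|$, using that $u<-\log(-a)$ on $\{\rho<a\}$. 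Either way $d_\omega(x,y)\ge|u(x)-u(y)|/\sqrt2$, and with this patch your contradiction ($\{u(p_k)\}$ bounded for a $d_\omega$-Cauchy sequence, versus $u(p_k)\to-\infty$) goes through.
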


In case $E\subset X$ is a complex subvariety, this result is well-known and due to Grauert \cite{Grauert_comp-kahler} (see also Demailly \cite{Demailly82} for closely related results). Note that even in this special case, $X\!\setminus\!E$ cannot be Stein unless $E$ is empty or purely one-codimensional, in view of the second Riemann extension theorem.

\begin{proof}
The result is essentially an easy consequence of Theorem \ref{thm:def-complete-polar}, as we now explain below.

Let $\varphi$ be a $C^{\infty}$ strictly psh exhaustion function for $X$ and let
$\psi\!: X\!\setminus\!E\to \mathbb R$ be a $C^{\infty}$ function such that
   $$
   \psi=-\log(-\rho) \quad \mbox{on}\;\; U\!\setminus\!E,
   $$
where $\rho$ is a psh function as in Theorem \ref{thm:def-complete-polar} with $\sup_X\rho>-1$ and $U\coloneqq\{\rho<-1\}$. Then one can construct a $C^{\infty}$ convex, rapidly increasing function $\chi$ on $\mathbb R$ such that
   $$
   \omega\coloneqq\sqrt{-1}\partial\bar{\partial}(\chi\circ\varphi)+\sqrt{-1}\partial\bar{\partial}\psi\geq \omega_0 \quad \mbox{on}\;\; X\!\setminus\!E
   $$
for some complete K\"{a}hler metric $\omega_0$  on $X$.

We claim that $\omega$ is complete on $X\!\setminus\!E$. For this, we may assume without loss of generality that $X$ itself is connected (and so is $X\!\setminus\!E$). Suppose $\{x_j\}_{j\in \mathbb N}$ is a bounded sequence in the metric space $(X\!\setminus\!E,\, \omega)$. Then there is a sequence of smooth curves $\{\gamma_j\}_{j\in \mathbb N}\subset C^{\infty}([0, 1],\, X\!\setminus\!E)$ with uniformly bounded lengths with respect to $\omega$, joining every $x_j$ to a (fixed) reference point in $X\!\setminus\!\overline{U}$. Since $\omega\geq\omega_0$ on $X\!\setminus\!E$ and $\omega_0$ is complete on $X$, we may assume that the sequence $\{x_j\}_{j\in \mathbb N}$ itself converges in $X$  by passing to a subsequence if necessary. What now remains is to show that the limit of $\{x_j\}_{j\in \mathbb N}$ lies outside $E$. Suppose the contrary and set
   $$
   t_j\coloneqq\inf\!\big\{t\in [0,1]\!: \gamma_j([t, 1])\subset U\big\},\quad j\in \mathbb N.
   $$
Clearly $0<t_j<1$ and $\gamma_j(t_j)\in\partial U=\{\rho=-1\}$ for all sufficiently large $j$. Observe also that
   $$
   \omega\geq \sqrt{-1}\partial\bar{\partial}\big(-\log(-\rho)\big)\geq \sqrt{-1}\partial\log(-\rho)\wedge \bar{\partial}\log(-\rho)\quad \mbox{on}\;\; U\!\setminus\!E.
   $$
It then follows that
   \begin{equation*}
      \begin{split}
      \sqrt2\,{\rm length}_{\omega}(\gamma_j)&\geq\int_{t_j}^1\big|\big(d\log(-\rho)\big)(\gamma'(t))\big|\,dt \geq\int_{t_j}^1\big(d\log(-\rho)\big)(\gamma'(t))\,dt\\
      &=\log(-\rho(x_j))\to \infty \quad {\rm as}\;\;  j\to \infty,
      \end{split}
   \end{equation*}
contradicting the boundedness of $\{{\rm length}_{\omega}(\gamma_j)\}_{j\in \mathbb N}$. Therefore the limit of $\{x_j\}_{j\in \mathbb N}$ lies outside $E$, and hence $\omega$ is complete on $X\!\setminus\!E$.
\end{proof}

\begin{remark}
If we denote by $d_{\omega}$ and $d_{\omega_0}$ the distance functions on $X\!\setminus\!E$ and $X$ associated to $\omega$ and $\omega_0$, respectively, and by $\chi_{U\setminus E}$  the characteristic function of $U\!\setminus\!E$, then the preceding argument actually shows the estimate
  $$
  d_{\omega}(x, x_0)\geq \max\Big\{d_{\omega_0}(x,x_0),\, \chi_{U\setminus E}(x)\log(-\rho(x))/\sqrt{2}\Big\},
  \quad x\in X\!\setminus\!E,\, x_0\in X\!\setminus\!\overline{U}.
  $$
This, of course, implies the completeness of $\omega$.
\end{remark}

We are now ready to prove Theorem $\ref{thm:Hartogs_PSH-manifold}$.

\begin{proof}[Proof of Theorem $\ref{thm:Hartogs_PSH-manifold}$]

The uniqueness part of the theorem is clear, since two psh functions on $\Omega$ which coincide almost everywhere are actually equal everywhere. So it suffices to prove the existence part.

Let us first reduce the problem to the case when $\Omega\supset K$ is a relatively compact Stein domain in the ambient manifold, which we denote by $X$. To this end, let $\rho$ be a psh function on $X$, continuous outside $K$ and satisfying $\rho^{-1}(-\infty)=K$ (see Theorem \ref{thm:def-complete-polar}). Choose an open set $U\subset X$ such that $K\subset U\subset\subset X$, and set
  $$
  \widetilde{\rho}\coloneqq
  \left\{
  \begin{array}{lll}
  \!\!\max\big\{\rho,\, \inf\limits_{\partial U}\rho\big\} \quad \mbox{on}\;\; X\!\setminus\!\overline{U}; \\
  \!\!\qquad  \quad  \rho\quad  \quad  \quad \;\;\,\mbox{on}\;\;\overline{U}.
  \end{array}
  \right.
  $$
Then  $\widetilde{\rho}$ is a psh function on $X$ with $\widetilde{\rho}^{\,-1}(-\infty)=K$. Replacing $\Omega$ by any connected component of $\{\widetilde{\rho}<\inf_{\partial U}\rho\}$, we may assume in what follows that $\Omega$ is a relatively compact Stein domain in $X$.

Now $\Omega\!\setminus\!K$ is a relatively compact complete K\"{a}hler domain in $X$, in view of Theorem \ref{thm:complete-Kahler}. Write $n={\rm dim}_{\mathbb C} X$ and fix
a continuous volume form $dV_X$ on $X$. Given $\varphi\in {\rm Psh}(\Omega\!\setminus\!K)$, we can therefore invoke Theorem \ref{thm:OT-extension} to assign to every point $x\in\Omega\!\setminus\!K$ a holomorphic $n$-form $f_x$ on $\Omega\!\setminus\!K$ with the property that
\begin{equation}\label{ineq:norm-estimate}
  \frac{\sqrt{-1}^{\,n^2}f_x\wedge\bar{f}_x}{dV_X}(x)=e^{\varphi(x)}
  \quad {\rm and} \quad \int_{\Omega\setminus K}\sqrt{-1}^{\,n^2} f_x\wedge\bar f_x\, e^{-\varphi} \leq {\rm const},
\end{equation}
where the constant involved is independent of $x\in\Omega\!\setminus\!K$. By the Hartogs extension theorem for holomorphic forms (see, e.g., \cite[p. 22, Corollaire du Th\'{e}or\`{e}me 3]{Serre_duality}), each such $f_x$ extends holomorphically to $\Omega$. With a slight abuse of notation, we still denote the extension by $f_x$.

We next prove the psh extendibility of $\varphi$ across $K$. By a classical result of Lelong, it suffices to show that every point of $K$ admits a small neighborhood on which $\varphi$ is bounded above. Fix an arbitrary point $x_0\in K$. Since the problem is local, we may assume without loss of generality that $\Omega$ is a bounded pseudoconvex domain in $\mathbb C^n$ containing $x_0$. Let $z=(z_1,\ldots, z_n)$ be the coordinate of $\Omega$. Identifying $x$ with its coordinate $z=z(x)$ and $f_x$ with $f_x/dz_1\wedge\cdots\wedge dz_n$, we conclude from \eqref{ineq:norm-estimate}, after shrinking $\Omega$ if necessary, that
\begin{equation}\label{ineq:norm-local-estimate}
  e^{\varphi(z)}\leq {\rm const}\, |f_z(z)|^2 \quad {\rm and} \quad
  \int_{\Omega\setminus K}|f_z|^2e^{-\varphi}\leq {\rm const},
\end{equation}
where the integral involved is taken against the Lebsesgue measure on $\mathbb C^n$. To proceed with the proof we now recall that as a polar subset of $\mathbb C^n\cong \mathbb R^{2n}$, $K$ has Hausdorff dimension at most $2n-2$ (see, e.g., \cite[Theorem 5.9.6]{ArGa_potential}). This enables us to make use of a result of Shiffman (see \cite[Lemma 2.3]{Harvey_Survey} or  \cite[Chapter 3, Lemma 4.7]{Demaillybook})), according to which we can find by suitably selecting affine linear coordinates for $\mathbb C^n$ a polydisc $\Delta'\times\Delta''\subset \mathbb C^{n-1}\times\mathbb C$ centered at $z_0=z(x_0)\eqqcolon(z'_0, z''_0)$ such that
  $$
  (\Delta'\times\partial\Delta'')\cap K=\emptyset.
  $$
By shrinking $\Delta'$ if necessary, we further arrive at
  $$
  \big(\Delta'\times (\Delta''\!\setminus\!(1-\varepsilon)\overline{\Delta''})\big)\cap K=\emptyset
  $$
for some sufficiently small $\varepsilon>0$. The remaining argument is the same as in the last paragraph of the proof of \cite[Theorem 1.2]{CWW}. Choose $R,\, r>0$ such that
  $$
  1-\varepsilon<r<R<1
  $$
and ball $B\subset\subset\Delta'$ centered at $z'_0$. Then the Cauchy estimate for holomorphic functions and inequalities in \eqref{ineq:norm-local-estimate} imply
\begin{equation*}
\begin{split}
e^{\varphi(z)}&\leq {\rm const}\, |f_z(z)|^2
\leq {\rm const}_{B,\, R,\, r,\, \varepsilon}\int_{B\times (R\Delta''\setminus r\Delta'')}|f_z|^2\\
&\leq {\rm const}_{B,\, R,\, r,\, \varepsilon}\sup_{B\times (R\Delta''\setminus r\Delta'')}e^\varphi
      \int_{B\times (R\Delta''\setminus r\Delta'')}|f_z|^2e^{-\varphi}\\
&\leq C\sup_{B\times (R\Delta''\setminus r\Delta'')}e^\varphi
\end{split}
\end{equation*}
for all $z\in \big((1-\varepsilon)(B\times\Delta'')\big)\setminus K$, where $C>0$ is a constant independent of $z$. Consequently, $\varphi$ is bounded above on $(1-\varepsilon)(B\times\Delta'')\ni z_0$ outside $K$. This completes the proof.
\end{proof}


\begin{thebibliography}{\;\;\, Roman}
\bibitem[AG01]{ArGa_potential} D. H. Armitage and S. J. Gardiner, Classical Potential Theory, Springer Monographs in Mathematics, Springer-Verlag London, Ltd., London, 2001.

\bibitem[BT88]{Bed-Tay_subext} E. Bedford and B. A. Taylor, {\it Smooth plurisubharmonic functions without subextension}, Math. Z. {\bf 198} (1988), 331--337.

\bibitem[Ber10]{Berndtsson-note} B. Berndtsson, {\it An introduction to things $\bar{\partial}$}, in Analytic and Algebraic Geometry, pp. 7--76, IAS/Park City Math. Ser., vol. 17, Amer. Math. Soc., Providence, RI, 2010.

\bibitem[BL16]{Berndtsson-Lempert} B. Berndtsson and L. Lempert, {\it A proof of the Ohsawa--Takegoshi theorem with sharp estimates}, J. Math. Soc. Japan {\bf 68} (2016), 1461--1472.

\bibitem[B{\l}o13]{Blocki13} Z. B{\l}ocki, {\it Suita conjecture and the Ohsawa--Takegoshi extension theorem},  Invent. Math. {\bf 193} (2013),   149--158.

\bibitem[B{\l}o14]{Blocki_Survey14} Z. B{\l}ocki, {\it Cauchy--Riemann meet Monge--Amp\`{e}re}, Bull. Math. Sci. {\bf  4} (2014),  433--480.

\bibitem[BDP22]{BDP22_Hartogs} A. Boggess, R. Dwilewicz and E. Porten, {\it On the Hartogs extension theorem for unbounded domains in $\mathbb C^n$}, Ann. Inst. Fourier (Grenoble) {\bf 72} (2022), 1185--1206.

\bibitem[CPB24]{Cao-Paun-Bern24} J. Cao, M. P\u{a}un and B. Berndtsson, {\it On the Ohsawa--Takegoshi extension theorem},  J. Geom. Anal. {\bf 34} (2024), Paper No. 25, 47 pp.

\bibitem[CC22]{Chan-Choi22} T. O. M. Chan and Y.-J.  Choi, {\it Extension with log-canonical measures and an improvement to the plt extension of Demailly-Hacon-P\u{a}un}, Math. Ann. {\bf 383} (2022), 943--997.

\bibitem[Che11]{Chen_simple} B.-Y. Chen, {\it  A simple proof of the Ohsawa--Takegoshi extension theorem}, arXiv:1105.2430.

\bibitem[Che17]{Chen_Har-Sob} B.-Y. Chen, {\it  Hardy--Sobolev type inequalities and their applications}, arXiv:1712.02044v2.

\bibitem[CWW15]{CWW} B.-Y. Chen, J. Wu and X. Wang, {\it Ohsawa--Takegoshi type theorem and extension of plurisubharmonic functions}, Math. Ann. {\bf 362} (2015), 305--319.

\bibitem[Col90]{Coltoiu}  M. Col\c{t}oiu, {\it Complete locally pluripolar sets}, J. Reine Angew. Math.
    {\bf 412} (1990), 108--112.

\bibitem[Dem82]{Demailly82} J.-P. Demailly, {\it Estimations $L^2$ pour l'op\'{e}rateur $\bar\partial$ d'un fibr\'{e} vectoriel holomorphe semi-positif au-dessus d'une vari\'{e}t\'{e} k\"{a}hl\'{e}rienne compl\`{e}te}, Ann. Sci. \'{E}cole Norm. Sup. (4) {\bf 15} (1982),  457--511.

\bibitem[Dem92]{Demailly92} J.-P. Demailly, {\it Regularization of closed positive currents and intersection theory}, J. Algebraic Geom. {\bf 1} (1992), 361--409.

\bibitem[Dem94]{Demailly94} J.-P. Demailly, {\it Regularization of closed positive currents of type $(1, 1)$ by the flow of a Chern connection}, in Contributions to Complex Analysis and Analytic Geometry, pp. 105--126, Aspects Math., E26, Friedr. Vieweg, Braunschweig, 1994.

\bibitem[Dem00]{Demailly-Manivel_extension} J.-P. Demailly, {\it On the Ohsawa--Takegoshi--Manivel $L^2$ extension theorem}, in Complex Analysis and Geometry (Paris, 1997), pp. 47--82, Prog. Math., vol. 188, Birkh\"{a}user, Basel, 2000.

\bibitem[Dem12a]{Demaillybook} J.-P. Demailly, Complex Analytic and Differential Geometry, electronically accessible at \url{https://www-fourier.ujf-grenoble.fr/~demailly/books.html}.

\bibitem[Dem12b]{Demaillybook2} J.-P. Demailly, Analytic Methods in Algebraic Geometry, Surveys of Modern Mathematics, Higher Education Press, Beijing, 2012.

\bibitem[DNWZ23]{DNWZ23_Math-Ann} F. Deng, J. Ning, Z. Wang and X. Zhou, {\it Positivity of holomorphic vector bundles in terms of $L^p$-estimates of  $\bar{\partial}$}, Math. Ann. {\bf 385} (2023), 575--607.

\bibitem[DWZZ24]{DWWZ_AJM} F. Deng, Z. Wang, L. Zhang and X. Zhou, {\it New characterization of plurisubharmonic functions and positivity of direct image sheaves}, Amer. J. Math. {\bf 146} (2024), 751--768.

\bibitem[DM14]{DM_cmp-plolar} N. Q. Dieu and P. V. Manh, {\it Complete pluripolar graphs in $\mathbb C^N$}, Ann. Polon. Math. {\bf 112} (2014), 85--100.

\bibitem[Edl04]{Edlund_cmp-curv} T. Edlund, {\it Complete pluripolar curves and graphs},  Ann. Polon. Math. {\bf 84} (2004), 75--86.

\bibitem[ElM84]{El-Mir_cmp-plolar} H. El Mir, {\it Sur le prolongement des courants positifs ferm\'{e}s}, Acta Math. {\bf 153} (1984), 1--45.

\bibitem[Gra56]{Grauert_comp-kahler} H. Grauert, {\it Charakterisierung der Holomorphiegebiete durch die vollst\"{a}ndige K\"{a}hlersche Metrik}, Math. Ann. {\bf 131} (1956), 38--75.

\bibitem[GZ15a]{GZ_optimal} Q. Guan and X. Zhou, {\it A solution of an $L^2$ extension problem with an optimal estimate and applications}, Ann. of Math. {\bf 181} (2015), 1139--1208.

\bibitem[GZ15b]{GZ_openness} Q. Guan and X. Zhou, {\it A proof of Demailly's strong openness conjecture}, Ann. of Math. {\bf 182} (2015), 605--616.

\bibitem[Har74]{Harvey_Extension} R. Harvey, {\it Removable singularities for positive currents}, Amer. J. Math. {\bf 96} (1974), 67--78.

\bibitem[Har77]{Harvey_Survey} R. Harvey, {\it Holomorphic chains and their boundaries}, in Several Complex Variables (Proc. Sympos. Pure Math., Vol. XXX, Part 1, Williams Coll., Williamstown, Mass., 1975), pp. 309--382. Amer. Math. Soc., Providence, R.I., 1977.

\bibitem[HP75]{Harvey_CPAM} R. Harvey and J. Polking, {\it Extending analytic objects}, Comm. Pure Appl. Math. {\bf 28} (1975), 701--727.

\bibitem[Hos20]{Hosono_jet-ext} G. Hosono, {\it The optimal jet $L^2$ extension of Ohsawa--Takegoshi type}, Nagoya Math. J. {\bf 239} (2020), 153--172.

\bibitem[Kim23]{Kim_JMPA} D. Kim, {\it $L^2$ extension of holomorphic functions for log canonical pairs},  J. Math. Pures Appl. (9), {\bf 177} (2023), 198--213.

\bibitem[LMP92]{LMP_IUMJ} N. Levenberg, G. Martin and E. A. Poletsky, {\it Analytic disks and pluripolar sets}, Indiana Univ. Math. J. {\bf 41} (1992), 515--532.

\bibitem[LXZ23]{LXZ_Math-Z} Z. Li, W. Xu and X. Zhou, {\it On Demailly's $L^2$ extension theorem from non-reduced subvarieties}, Math. Z. {\bf 305} (2023), Paper No. 23, 22 pp.

\bibitem[LX24]{LX_JFA} Z. Liu and W. Xu, {\it Characterizations of Griffiths positivity, pluriharmonicity and flatness}, J. Funct. Anal. {\bf 287} (2024), Paper No. 110532, 32 pp.

\bibitem[MV15]{McN-Var_survey15} J. D. McNeal and D. Varolin, {\it $L^2$ estimates for the $\bar{\partial}$ operator}, Bull. Math. Sci. {\bf 5} (2015), 179--249.

\bibitem[Ohs95]{Ohsawa95} T. Ohsawa, {\it On the extension of $L^2$ holomorphic functions III: negligible weights}, Math. Z. {\bf 219} (1995), 215--225.

\bibitem[Ohs18]{Ohsawabook} T. Ohsawa, $L^2$ Approaches in Several Complex Variables, Towards the Oka--Cartan theory with precise bounds, Springer Monographs in Mathematics, Springer, Tokyo, 2018.

\bibitem[Ohs20]{Ohsawa_L2-survey} T. Ohsawa, {\it A survey on the $L^2$ extension theorems}, J. Geom. Anal. {\bf 30} (2020), 1366--1395.

\bibitem[OT87]{OhsawaTakegoshi87} T. Ohsawa and K. Takegoshi, {\it On the extension of $L^2$ holomorphic functions}, Math. Z. {\bf 195} (1987), 197--204.

\bibitem[Pau10]{Paun10_ICM} M. P\u{a}un, {\it Quantitative extensions of twisted pluricanonical forms and non-vanishing}, Proceedings of the International Congress of Mathematicians, Volume II, pp. 540--557, Hindustan Book Agency, New Delhi, 2010.

\bibitem[Ran95]{Ransfordbook} T. Ransford, Potential Theory in the Complex Plane, London Mathematical Society Student Texts, 28. Cambridge University Press, Cambridge, 1995.

\bibitem[RZ21]{Rao-Zhang_jet-ext} S. Rao and R. Zhang,  {\it $L^2$ extension theorem for jets with variable denominators}, C. R. Math. Acad. Sci. Paris {\bf 359} (2021), 181--193.

\bibitem[Ser55]{Serre_duality} J.-P. Serre, {\it Un th\'{e}or\`{e}me de dualit\'{e}}, Comment. Math. Helv. {\bf 29} (1955), 9--26.

\bibitem[Shi68]{Shiffman68} B. Shiffman, {\it On the removal of singularities of analytic sets}, Michigan Math. J. {\bf 15} (1968), 111--120.

\bibitem[Shi72]{Shiffman72} B. Shiffman, {\it Extension of positive line bundles and meromorphic maps}, Invent. Math. {\bf 15} (1972), 332--347.

\bibitem[Sib85]{Sibony85} N. Sibony, {\it Quelques probl\`{e}mes de prolongement de courants en analyse complexe}, Duke Math. J. {\bf 52} (1985), 157--197.

\bibitem[Siu74]{Siu74_Analyticity} Y.-T. Siu, {\it Analyticity of sets associated to Lelong numbers and the extension of closed positive currents}, Invent. Math. {\bf 27} (1974), 53--156.

\bibitem[Siu02]{Siu_Extension02} Y.-T. Siu, {\it Extension of twisted pluricanonical sections with plurisubharmonic weight and invariance of semipositively twisted plurigenera for manifolds not necessarily of general type}, in Complex Geometry (G\"{o}ttingen, 2000), pp. 223--277, Springer, Berlin, 2002.

\bibitem[Sko82]{Skoda_Extension82} H. Skoda, {\it Prolongement des courants, positifs, ferm\'{e}s de masse finie}, Invent. Math. {\bf 66} (1982), 361--376.

\bibitem[Vij22]{Vijiitu} V. V\^{i}j\^{i}itu, {\it On Hartogs' extension}, Ann. Mat. Pura Appl. {\bf 201} (2022),  487--498.

\bibitem[Wan22a]{Wang_Pisa} X. Wang, {\it Bott--Chern cohomology and the Hartogs extension theorem for pluriharmonic functions}, Ann. Sc. Norm. Super. Pisa Cl. Sci. (5) {\bf 25} (2024), 605--610.

\bibitem[Wan22b]{Wang_22} X. Wang, {\it Complete pluripolar sets and removable singularities of plurisubharmonic functions}, Indiana Univ. Math. J., to appear; see also arXiv:2210.01416.

\bibitem[ZZ18]{ZZ_optimal} X. Zhou and L. Zhu, {\it An optimal $L^2$ extension theorem on weakly pseudoconvex K\"{a}hler manifolds}, J. Differential Geom. {\bf 110} (2018), 135--186.

\bibitem[ZZ20]{ZZ_MathAnn} X. Zhou and L. Zhu, {\it Siu's lemma, optimal $L^2$ extension and applications to twisted pluricanonical sheaves}, Math. Ann. {\bf 377} (2020), 675--722.

\end{thebibliography}
\end{document}